\newtheorem{theorem}{Theorem}[section]
\newtheorem{conjecture}[theorem]{Conjecture}
\newtheorem{corollary}[theorem]{Corollary}
\newtheorem{definition}[theorem]{Definition}
\newtheorem{example}[theorem]{Example}
\newtheorem{lemma}[theorem]{Lemma}
\newtheorem{notation}[theorem]{Notation}
\newtheorem{proposition}[theorem]{Proposition}
\newtheorem{remark}[theorem]{Remark}
\newcommand{\Z}{\mathbb Z}
\newcommand{\N}{\mathbb N}
\newcommand{\Q}{\mathbb Q}
\newcommand{\R}{\mathbb R}
\newcommand{\K}{\mathbb K}
\newcommand{\vs}[1]{\langle #1 \rangle}
\begin{document}

\title{Wilf's conjecture and Macaulay's theorem}
\date{August 2015}

\author{S.~Eliahou}

\maketitle

\begin{abstract}
Let $S \subseteq \N$ be a numerical semigroup with multiplicity $m=\min(S\setminus \{0\})$, conductor $c=\max(\N \setminus S)+1$ and minimally generated by $e$ elements. Let $L$ be the set of elements of $S$ which are smaller than $c$. Wilf conjectured in 1978 that $|L|$ is bounded below by $c/e$. We show here that if $c \le 3m$, then $S$ satisfies Wilf's conjecture. Combined with a recent result of Zhai,  this implies that the conjecture is asymptotically true as the genus $g(S)=|\N \setminus S|$ goes to infinity. One main tool in this paper is a classical theorem of Macaulay on the growth of Hilbert functions of standard graded algebras.
\end{abstract}

\small
\bigskip
\noindent
\textbf{Keywords:} Numerical semigroup; Wilf conjecture; Ap\'ery element; graded algebra; Hilbert function; binomial representation; sumset.

\medskip
\noindent
\textbf{MSC 2010:} 05A20; 05A10; 11B75; 11D07; 20M14; 13A02.

\section{Introduction}
A \textit{numerical semigroup} is a subset $S \subseteq \N$ closed under addition, containing 0 and of finite complement in $\N$. The elements of $\N \setminus S$ are called the \textit{gaps} of $S$. The largest gap is denoted $F(S) = \max (\N \setminus S)$ and is called the \emph{Frobenius number} of $S$. The integer $c(S)=F(S)+1$ is known as the \textit{conductor} of $S$. It satisfies $c(S)+\N \subseteq S$ and is minimal for that property. The number of gaps $g(S)=|\N \setminus S|$ is known as the \textit{genus} of $S$, and the smallest nonzero element $m(S) = \min (S\setminus \{0\})$ as the \textit{multiplicity} of $S$. 

Every numerical semigroup $S$ is finitely generated, i.e. is of the form 
$$
S \ = \ \vs{a_1,\dots,a_n} \ = \ \N a_1+ \dots + \N a_n
$$
for suitable globally coprime integers $a_1,\dots,a_n$. The least number $n$ of generators of $S$ is denoted $e=e(S)$ and is called the \textit{embedding dimension} of $S$.

Is there a general upper bound for the density of the gaps of $S$ in the integer interval $[0,c(S)-1]$? This question was asked by Wilf in \cite{W} where, more precisely, he asked whether for $S=\vs{a_1,\dots,a_n}$ the bound 
$$
\frac{|\N \setminus S|}{c(S)} \ \le \ 1-1/n
$$
might always hold\footnote{Of course, the question is sharpest when $n=e(S)$, the embedding dimension of $S$.}. This question is still widely open and is often referred to as Wilf's conjecture, in the following equivalent form. We shall denote $L(S)=S \cap [0,c(S)-1]$ thoughout, where `L' stands for \emph{left part} relative to the conductor.
\begin{conjecture}[Wilf] Let $S$ be a numerical semigroup generated by $n$ elements. Then
$$
\frac{|L(S)|}{c(S)} \ \ge \ \frac{1}{n}.
$$
\end{conjecture}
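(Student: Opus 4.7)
The plan is to attack Wilf's conjecture by combining the fine structure of the Apéry set of $S$ relative to the multiplicity $m = m(S)$ with a sumset/Hilbert-function bookkeeping that controls $|L(S)|$ in terms of the number of minimal generators $e$. First, I would partition $L(S)$ into $q := \lceil c/m \rceil$ floors
$$
L_k \ := \ S \cap [km, (k+1)m - 1], \qquad 0 \le k \le q-1,
$$
so that $|L(S)| = \sum_{k=0}^{q-1} |L_k|$ with $|L_0| = 1$ and $|L_k| \le m$ for each $k$. Writing the target inequality as $e \cdot |L(S)| \ge c$, it suffices to pit the $e$ minimal generators against the $q$ floors whose sizes I must bound from below.

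Second, I would split the minimal generating set into $P = \{a_i : a_i < c\}$ and $Q = \{a_i : a_i \ge c\}$, with $|P| + |Q| = e$. Every element of $L(S)$ is a nonnegative $\N$-combination of elements of $P$ alone, so $L(S)$ is covered by the iterated sumsets $kP \cap [0, c-1]$ for $k \ge 0$. The key additive-combinatorial task is then to show that $S$ spreads out across floors quickly enough, in the sense that each floor $L_k$ with $k$ not too small has size at least some explicit function of $|P|$, and that these lower bounds, summed against the trivial ceiling $|L_k| \le m$ in the remaining floors, are enough to overcome $c/e$.

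Third, the quantitative input I would use is Macaulay's theorem on the growth of Hilbert functions, applied to a standard graded algebra attached to $S$ — concretely, a quotient of a polynomial ring over $\K$ whose degree-$k$ piece records $L_k$, or a closely related Apéry-slice variant. Macaulay's theorem controls consecutive values of the Hilbert function via the upper binomial representation, and in the special regime $q \le 3$ — where only $L_0$, $L_1$ and $L_2$ appear — those constraints, combined with $|L_0| = 1$ and the lower bound on $|L_1|$ forced by the primitives in $P$, are tight enough to close the argument and yield $e \cdot |L(S)| \ge c$.

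The main obstacle, and where I expect the proof to break without fundamentally new input, is extending this scheme past $q = 3$. When $q$ is large the floors eventually saturate at $|L_k| = m$, but Macaulay's inequality on its own does not exploit the full additive rigidity of a numerical semigroup across many floors — it treats the associated algebra as a generic standard graded algebra and loses the closure of $S$ under $+$. A complete proof seems to require either a sharper Hilbert-function inequality special to algebras arising from numerical semigroups, or a delicate induction on $q$ that feeds Macaulay's bound with progressively stronger initial data drawn from the Apéry set modulo $m$. This is precisely the point at which Wilf's conjecture remains open in general, and I do not see how to bridge the gap without ideas substantially beyond those in the present excerpt.
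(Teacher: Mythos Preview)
The statement you are addressing is a \emph{conjecture}, and the paper does not claim to prove it in general; it only establishes the case $q \le 3$ (Corollary~\ref{case q=3}). Your proposal correctly recognizes this: you outline a strategy, carry it through for $q \le 3$, and then explicitly concede that the argument does not extend further. So there is no genuine gap to name beyond the one you already flag --- the conjecture is open, and neither you nor the paper closes it.

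That said, your sketch of the $q \le 3$ case is close in spirit to the paper's approach but differs in two technical respects worth noting. First, your floors $L_k = S \cap [km,(k+1)m-1]$ are cut at multiples of $m$, whereas the paper uses the shifted intervals $I_j = [jm-\rho,(j+1)m-\rho[$ so that $I_q = [c,c+m[$ lands exactly on the first full block past the conductor. This shift is not cosmetic: it is what makes the count $m = p_q + d_q$ clean and allows the paper to isolate the constant $W_0(S) = |P\cap L|\,|L| - q d_q + \rho$, whose nonnegativity is the real target in the $q=3$ argument. With your unshifted floors the conductor may sit mid-block and the bookkeeping with $\rho$ becomes awkward. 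Second, the paper does not apply Macaulay's theorem directly to the full semigroup algebra; it first reduces (Proposition~\ref{reduction}) to profile $(k,0)$ and then builds a tailored standard graded algebra $R' = R/J$ whose Hilbert function in degrees $2$ and $3$ records $|2X_1 \cap X_2|$ and $|3X_1 \cap X_3|$ (Lemma~\ref{X2X3}). Your proposal gestures at ``a quotient of a polynomial ring whose degree-$k$ piece records $L_k$'' but does not specify the ideal $J$, and that choice is where the actual work lies.
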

The equivalence between the two formulations plainly follows from the formulas 
$$
|L(S)|+|\N \setminus S| \ = \ \large | \, [0,c-1] \, \large | \ = \ c,
$$
where $c=c(S)$. Wilf gave the following example where equality holds in his conjecture:
$$
S \ = \ \{0\} \cup (m+\N) \ = \ \{0,m,m+1, \dots\}
$$
for some integer $m \ge 2$. Indeed in this case, one has $|L(S)|=1$, $c(S)=m$, and $e(S)=m$ since $S$ is minimally generated by $\{m,m+1,\dots, 2m-1\}$.

Another equality case in Wilf's conjecture is when $e(S)=2$, i.e. for two-generated numerical semigroups $S=\vs{a,b}$ with $\gcd(a,b)=1$. Indeed, nearly a century before the formulation of the conjecture, Sylvester showed in \cite{Sy} that one has $c(S)=(a-1)(b-1)$ and $|L(S)| = c(S)/2$ in this case.

Finally, the last known equality case in Wilf's conjecture is the following:
$$
S \ = \ m\N \cup (qm+\N) \ = \ \{0,m,2m,\dots,(q-1)m, qm, qm+1, qm+2,\dots\}
$$
for given integers $m,q \ge 1$. Indeed in this case, one has $|L(S)|=q$, $c(S)=qm$, and $e(S)=m$ since $S$ is minimally generated by $\{m,qm+1,qm+2, \dots, qm+m-1\}$. This case actually generalizes the first one by taking $q=1$. 

It is not known whether these are the only equality cases in Wilf's conjecture, but all independent computer experiments so far suggest that the above list might well be complete. See e.g. Question 8 in \cite{MS}.

\medskip

Wilf's conjecture has been shown to hold under various hypotheses, including in \cite{Sy} for $e=2$ as mentioned above, in \cite{FGH} for $e=3$, in \cite{DM} for $|L| \le 4$, by computer in \cite{Br08} for genus $g \le 50$ and more recently in \cite{FH} for $g \le 60$, in \cite{K} for $c \le 2m$, and in \cite{Sa} for $e \ge m/2$ and for $m \le 8$.

\medskip
In this paper, we extend the verification of Wilf's conjecture to all numerical semigroups $S$ satisfying $c \le 3m$, and in some other circumstances. The importance of the former case stems from a recent result of Zhai stating that, asymptotically as the genus $g(S)$ goes to infinity, the proportion of numerical semigroups $S$ satisfying $c(S) \le 3m(S)$ tends to 1 \cite{Z}. In a forthcoming paper, we will show that Wilf's conjecture holds for all numerical semigroups $S$ satisfying $|L(S)| \le 10$.

\smallskip

One key tool in the present paper is a suitable version of Macaulay's classical theorem on the growth of Hilbert functions of standard graded algebras.

\smallskip
Here are a few more details on the contents of this paper. Section~\ref{notation} is devoted to basic notation and notions used throughout the paper. In Section~\ref{partition}, we study a convenient partition of a numerical semigroup $S$ by its intersections with translates of the integer interval $[c,c+m-1]$, and we introduce the \textit{profile} of $S$. A brief Section~\ref{apery} gives some useful formulas in terms of Ap\'ery elements with respect to $m$. Section~\ref{hilbert} recalls some background material on standard graded algebras, Hilbert functions and Macaulay's theorem, and proposes a condensed version thereof which is well-suited to our subsequent applications to Wilf's conjecture. Section~\ref{wilf} is the heart of the paper, where all the material developed in the preceding sections is used to settle Wilf's conjecture in the case $2m < c \le 3m$. A few more cases of the conjecture are then settled in the last Section~\ref{further}.

\smallskip
Nice books are available for background information on numerical semigroups. See \cite{R, RG}.

\section{More notation}\label{notation}

In this paper we shall mostly use \textit{integer intervals}, not real ones, except in Section~\ref{hilbert}. So, for rational numbers $x,y \in \Q$, we shall denote
%\begin{eqnarray*}
%[x,y] & = & \{n \in \Z \mid x \le n \le y\}, \\
%[x,y[ & = & \{n \in \Z \mid x \le n < y\}, \\
%\end{eqnarray*}
\begin{eqnarray*}
[x,y] & = & \{n \in \Z \mid x \le n \le y\},
\end{eqnarray*}
\vspace{-0.9cm}
\begin{eqnarray*}
[x,y[ & = & \{n \in \Z \mid x \le n < y\}.
\end{eqnarray*}
In particular, if $y \in \Z$ then $[x,y[\; = [x,y-1]$ and $\big| [x,y[ \big|=y-x$. We shall also denote $[x,\infty[ \; = \{n \in \Z \mid n \ge x\}$.
%In particular, note that $[x,y[ = [x,y-1]$ if $y \in \Z$, whence $\big| [x,y[ \big|=y-x$. We shall also denote $[x,\infty[ \ = \  \{n \in \Z \mid n \ge x\}$.

\subsection{Primitives and decomposables}
Let $S$ be a numerical semigroup. We shall denote $S^* \ = \ S \setminus \{0\}$.
 
\begin{definition} We say that the element $x \in S^*$ is \emph{decomposable} if 
$$
x \ = \ x_1+ x_2
$$
for some  $x_1,x_2 \in S^*$, \emph{primitive} otherwise\footnote{Other commonly used terms for \emph{primitive element} are \emph{irreducible element} or \emph{atom}.}. We denote by $D = D(S)$ the set of decomposable elements in $S^*$, and by $P = P(S)$ its set of primitive elements. Thus $S^* = P \mathbin{\dot{\cup}} D$, the disjoint union of $P$ and $D$.
\end{definition}

Denoting $A+B \ = \ \{a+b \mid a \in A, b \in B\}$ the \textit{sum} of two subsets $A,B \subseteq \Z$, or simply $a+B$ if $A=\{a\}$, we have 
$$
D \ = \ S^*+S^*, \quad P \ = \ S^* \setminus D. 
$$
Clearly, every element $x \in S^*$ may be expressed as a finite sum of primitive elements. That is, the set $P$ generates $S$ as a semigroup. In fact, $P$ is the unique \textit{minimal generating set} of $S$, since every generating set of $S$ necessarily contains $P$. 

The finiteness of $P$, i.e. of the embedding dimension $e=|P|$, follows from the inclusion $P \subseteq [m,c+m[$, which itself is due to the inclusions
$$
[c+m,\infty[ \; \ = \ m+[c,\infty[ \; \ \subseteq \ m + S^* \ \subseteq \ S^* + S^* \ = \ D.
$$
Alternatively, one has $|P| \le m$, since any two distinct primitive elements of $S$ cannot be congruent mod $m$.

%\bigskip
%==============================================
%\subsection{Notation}
%
%For a numerical semigroup $S \subseteq \N$, we shall use the following notation and standard terminology throughout.
%\begin{notation}{ \hspace{0cm}}
%\begin{eqnarray*}
%S^* & = & S \setminus \{0\} \\
%m & = &  \min S^*, \textrm{ the \emph{multiplicity} of $S$} \\
%c & = & \textrm{the \emph{conductor} of $S$, i.e. the smallest integer satisfying $[c, \infty[ \; \subseteq S$} \\
%L & = &  S \cap [0, c[, \textrm{the \emph{left part} of $S$} \\
%g & = &  |\N \setminus S| = c-|L| \textrm{ the \emph{genus of $S$}} \\
%P& = & \textrm{the set of primitive elements of $S$} \\
%e& = & |P|, \textrm{ known as the \emph{embedding dimension} of $S$} \\
%W & = & |P||L|-c.
%\end{eqnarray*}
%\end{notation}
%
%We may occasionally write
%$$
%m(S), \ c(S), \ L(S), \ g(S), \ P(S), \ e(S), \ W(S), 
%$$
%respectively, if the dependency of these parameters to $S$ needs to be highlighted. 
%
%==============================================

\subsection{The associated constants $q$, $\rho$ and $W(S)$}

The following constants associated to $S$ will be used throughout the paper, often tacitly so.
\begin{notation}\label{q,rho} Let $S$ be a numerical semigroup. We denote by $q=q(S)$ and $\rho=\rho(S)$ the unique integers satisfying
$$
c \ = \ qm-\rho
$$
with remainder $\rho \in [0,m[$. That is, we set \ $q = \left\lceil c/m \right\rceil$ and $ \rho = qm-c.$
\end{notation}

%\smallskip
\begin{example} If $q=1$, then $\rho = 0$, and $c=m$ since $c \ge m$ always.
The semigroup structure of $S$ is very simple in this case, namely
$$S \ = \ \{0\} \cup [c, \infty[.$$
\end{example}
This case was met above already, as the first example of equality in Wilf's conjecture.

\begin{example} If $q=2$, then $m < c \le 2m$. As mentioned above, Wilf's conjecture holds in this case as well \cite{K}. See below for a new simpler proof.
\end{example}

Thus, Wilf's conjecture holds for $q \le 2$. In this paper, we extend this result to the much more demanding case $q=3$.

\begin{notation} Let $S$ be a numerical semigroup. We denote
$$
W(S) \ = \ e(S)|L(S)|-c(S).
$$
\end{notation}
It allows us to reformulate Wilf's conjecture in the following equivalent way.
\begin{conjecture} Let $S$ be a numerical semigroup. Then
$
W(S) \ge 0.
$
\end{conjecture}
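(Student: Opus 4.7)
Since the statement is merely a regrouping of Wilf's inequality $|L|/c \ge 1/e$ as $e|L| - c \ge 0$, the equivalence itself is a one-line verification (multiply through by the positive quantity $ec$). The real task is to prove $W(S) \ge 0$, which is open in general; in line with the announced scope of the paper, I will sketch an attack aimed at the case $c \le 3m$, i.e.\ $q \le 3$.

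First, I would partition $L$ by the intervals $[km, (k+1)m-1]$: set $L_k = S \cap [km, (k+1)m-1]$ and $P_k = P \cap [km, (k+1)m-1]$. The containment $P \subseteq [m, c+m-1]$, derived in the excerpt from $m+S^* \subseteq D$, gives $e = |P_1| + \cdots + |P_q|$, while $|L| = |L_0| + \cdots + |L_{q-2}| + |S \cap [(q-1)m, c-1]|$. The inequality $e|L| \ge c = qm-\rho$ then becomes a block-wise comparison, and one is led to lower-bound each $|L_k|$ in terms of the primitives available in the earlier strata.

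The central algebraic step is to model the growth of the $|L_k|$ by passing to Ap\'ery elements modulo $m$ and viewing the resulting data as the Hilbert function of a standard graded $\K$-algebra generated in degree one by the images of the primitives of $S$. Macaulay's theorem controls how fast such a Hilbert function can grow from one degree to the next, and dually forces a lower bound on later values in terms of earlier ones plus new generators. Iterating across the blocks $L_0, L_1, \dots, L_{q-1}$ and summing should yield $e|L| \ge c$ when $q$ is small, certainly for $q \le 2$ (recovering the known case $c \le 2m$ of \cite{K}) and, more delicately, for $q = 3$.

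The main obstacle I anticipate is the truncation in the last block, which has length only $m-\rho$, coupled with the fact that Macaulay's bound becomes sharp only in certain ranges of the input Hilbert value; tight control is required precisely when $\rho$ is close to $m$, i.e.\ when $c$ is far from a multiple of $m$. A secondary subtlety is to identify the correct graded object: the Ap\'ery residues of the $L_k$ mod $m$ do not immediately assemble into the Hilbert function of a single algebra, so some care in choosing the \emph{profile} of $S$ (announced in Section~\ref{partition}) is needed before Macaulay's theorem can be applied cleanly.
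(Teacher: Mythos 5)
The statement you were asked about is a \emph{conjecture}: the paper does not prove $W(S)\ge 0$ in general, and neither do you. What can be (and is) proved on the spot is the equivalence with Wilf's original formulation, which you dispatch correctly in one line. Your sketch of an attack for $q\le 3$ is in the right spirit --- partition into blocks of length $m$, pass to Ap\'ery elements, encode growth as a Hilbert function, invoke Macaulay --- but as written it is an outline with two concrete gaps relative to what actually makes the argument work. First, you anchor your blocks at $0$, taking $L_k=S\cap[km,(k+1)m-1]$; the paper instead anchors them at the conductor, using $I_j=[jm-\rho,(j+1)m-\rho[$, the translates of $[c,c+m[$. With that choice every relevant block has full length $m$ and $I_{q-1}=[c-m,c[$ ends exactly at $c$, so the ``truncated last block'' you flag as the main obstacle is an artifact of your partition and simply disappears. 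The price of this realignment is that sums of blocks are only weakly graded, $S_i+S_j\subseteq S_{i+j-1}\cup S_{i+j}\cup S_{i+j+1}$, with the leakage into $S_{i+j-1}$ bounded by $\rho$; that is the genuine place where $\rho$ enters, not the truncation.

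Second, ``iterating across the blocks and summing should yield $e|L|\ge c$'' hides the actual content. The paper does not bound $W(S)$ directly: it passes to $W_0(S)=|P\cap L|\,|L|-qd_q+\rho\le W(S)$ (Proposition~\ref{prop W vs W0}), reduces via Proposition~\ref{reduction} to profile $(p_1,0)$ so that all left primitives sit in degree one, and then uses Macaulay exactly once, to show that $|2X_1\cap X_2|=\binom{x}{2}$ forces $|3X_1\cap X_3|\le\binom{x+1}{3}$ (Lemma~\ref{X2X3}). Your own closing caveat --- that the Ap\'ery residues of the blocks do not immediately assemble into the Hilbert function of a single standard graded algebra --- is precisely the missing idea: one must take the monomial algebra $\K[t^{a_1}u,\dots,t^{a_k}u]$ and quotient by a carefully chosen monomial ideal $J$ so that $\dim R'_i=|iX_1\cap X_i|$ for $i=2,3$, checking that killing the degree-$2$ monomials outside $X_2$ does not accidentally kill degree-$3$ monomials indexed by $X_3$. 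Without that construction, and without the final binomial computation that converts $|3X_1\cap X_3|\le\binom{x+1}{3}$ into $W_0(S)-\rho\ge(k-x)\binom{x}{2}\ge 0$, your sketch does not yet close even the case $q=3$, and of course the general conjecture remains open.
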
 
The new results presented in this paper have been obtained via this formulation, by a successful evaluation of $W(S)$ in the cases under consideration.

%%%%%%%%%%%%%%%%%%%%%%%%%%%%%%%%%%%%%%%%%%%
\section{A convenient partition} \label{partition}
Throughout this section, $S$ denotes a numerical semigroup with multiplicity $m$, conductor $c$ and associated constants $q, \rho$.

\subsection{The interval $[c,c+m[$}
The integer interval $[c,c+m[$ of cardinality $m$ is entirely contained in $S$ and plays a special role in our present approach. We shall denote it by
$$
I_{q} \ = \ [c, c+m[.
$$
More generally, we shall consider the various translates of $I_q$ by multiples of $m$. 
\begin{notation} For $j \in \Z$, we denote by $I_j$ the \emph{translate} of $I_q$ by $(j-q)m$, i.e.
\begin{eqnarray*}
I_j & = & I_q + (j-q)m \\
& = & [c-(q-j)m, c-(q-j-1)m[ \\
& = & [jm-\rho, (j+1)m-\rho[.
\end{eqnarray*}
\end{notation}
For instance, we have
$$
I_{q-1} \ = \ [c-m,c[, \quad 
I_{1} \ = \ [m-\rho, 2m-\rho[, \quad 
I_{0} \ = \ [-\rho,m-\rho[.
$$
As the various $I_j$ for $j \ge q+1$ need not be distinguished here, we denote
$$
I_{\infty} \ = \ \bigcup_{j \ge q+1} I_j  \ = \ [c+m, \infty[.
$$
The partition of $S$ induced by the intervals $I_j$'s will be used throughout.
\begin{notation} For all $j \ge 0$, we denote
$$S_j \ = \ S \cap I_j \ = \ S\cap [jm-\rho, (j+1)m-\rho[.$$
\end{notation}
Note the following straightforward properties:
\begin{eqnarray*}
jm & \in & S_j  \quad \forall j \ge 0, \\
S_0 & = & S \cap [-\rho, m-\rho[ \ \ =\ \{0\}, \\
S_1 & \subseteq &  [m, 2m-\rho[,  \quad \textrm{(as $\min S_1 = m$)} \\
S_{q-1} & \subsetneq & I_{q-1}, \quad \textrm{(as $c-1 \in I_{q-1} \setminus S$)} \\
S_{q+j} & = & I_{q+j} \quad \forall j \ge 0.
\end{eqnarray*}
\begin{lemma}\label{formula for |L|}
Let $L=L(S)=S \cap [0,c[$. We have
\begin{eqnarray*}
L & = & S_0 \, \mathbin{\dot{\cup}} \, S_1  \, \mathbin{\dot{\cup}} \,  \cdots  \, \mathbin{\dot{\cup}} \, S_{q-1}, \\
|L| & = & 1 + |S_1| + \dots + |S_{q-1}|.
\end{eqnarray*}
\end{lemma}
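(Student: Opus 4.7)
The plan is to observe that the translates $I_0, I_1, \ldots, I_{q-1}$ form a partition of the integer interval $[-\rho, c[\,$, and then intersect with $S$.

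First I would check that the $I_j$'s tile $[-\rho, c[\,$ without overlap. By definition, $I_j = [jm-\rho,(j+1)m-\rho[$, so each has length $m$, and consecutive ones meet precisely at the right endpoint / left endpoint. Since $I_0$ starts at $-\rho$ and $I_{q-1}$ ends at $qm-\rho = c$, the disjoint union $I_0 \mathbin{\dot\cup} I_1 \mathbin{\dot\cup} \cdots \mathbin{\dot\cup} I_{q-1}$ equals the integer interval $[-\rho, c[\,$.

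Next I would relate this to $L$. Because $S \subseteq \N$, we have $S \cap [-\rho, 0[\, = \emptyset$, so
\[
L \;=\; S \cap [0, c[ \;=\; S \cap [-\rho, c[\, \;=\; \bigcup_{j=0}^{q-1} (S \cap I_j) \;=\; \bigcup_{j=0}^{q-1} S_j,
\]
and the union is disjoint since the $I_j$'s are. This gives the first displayed equality of the lemma.

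For the cardinality equality, I would invoke the properties listed just before the lemma, namely $S_0 = \{0\}$, so $|S_0| = 1$. Summing $|S_j|$ over $j = 0, \ldots, q-1$ then yields $|L| = 1 + |S_1| + \cdots + |S_{q-1}|$. There is no real obstacle here; the only point requiring a bit of care is noting that although $I_0$ contains negative integers (when $\rho > 0$), these contribute nothing to $S$, so $S_0$ reduces to $\{0\}$ as asserted.
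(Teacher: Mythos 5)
Your argument is correct and matches the paper's (one-line) proof, which likewise rests on the observation that $[0,c[\,$ is contained in the disjoint union $\mathbin{\dot{\bigcup}}_{0 \le j \le q-1} I_j$ and that $S_0=\{0\}$. You have simply spelled out the details (the tiling of $[-\rho,c[\,$ by the $I_j$ and the emptiness of $S \cap [-\rho,0[\,$) that the paper leaves implicit.
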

\begin{proof} Straightforward from the definitions, since $L \subseteq [0,c[ \, \subseteq  \mathbin{\dot{\bigcup}_{0 \le j \le q-1}} I_j$.
\end{proof}
\begin{lemma} We have
$$
m + S_j \ \subseteq \ S_{j+1} \quad \textrm{ for all } \: j \ge 0
$$
and, in particular,
$$
1 \ = \ |S_0| \ \le \ |S_1| \ \le \ \cdots \ \le \ |S_{q-1}|.
$$
\end{lemma}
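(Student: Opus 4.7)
The plan is to prove the inclusion $m+S_j \subseteq S_{j+1}$ directly from the definitions, and then deduce the chain of inequalities by observing that translation by $m$ is an injection.

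First I would take an arbitrary $x \in S_j$, so $x \in S$ and $x \in I_j = [jm-\rho,(j+1)m-\rho[$. Since $m \in S^*$ and $S$ is closed under addition, $x+m \in S$. Shifting the interval by $m$ gives
\[
x+m \;\in\; [(j+1)m-\rho,\,(j+2)m-\rho[\;=\; I_{j+1},
\]
so $x+m \in S \cap I_{j+1} = S_{j+1}$. This establishes $m+S_j \subseteq S_{j+1}$.

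For the chain of inequalities, the map $\varphi: S_j \to S_{j+1}$ defined by $\varphi(x)=x+m$ is injective (translation is a bijection on $\Z$), and its image is the subset $m+S_j$ of $S_{j+1}$. Hence
\[
|S_j| \;=\; |m+S_j| \;\le\; |S_{j+1}|.
\]
Combining this for $j=0,1,\dots,q-2$ and using the previously noted fact that $S_0=\{0\}$, so $|S_0|=1$, yields the stated inequality chain
\[
1 \;=\; |S_0| \;\le\; |S_1| \;\le\; \cdots \;\le\; |S_{q-1}|.
\]

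There is really no obstacle here: the only substantive observation is that $m \in S$ so adding it preserves membership in $S$, while the interval shift is a formal computation. The lemma is essentially a bookkeeping statement about how the partition $\{I_j\}$ interacts with multiplication-by-$m$ translation, and the injectivity of translation does the rest.
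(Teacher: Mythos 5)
Your proof is correct and is precisely the unpacking of the paper's one-line justification (``straightforward from the definitions''): closure of $S$ under addition of $m$ combined with the interval identity $m+I_j=I_{j+1}$, followed by injectivity of translation for the cardinality chain. Nothing to add.
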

\begin{proof} Straightforward from the definitions.
\end{proof}

%\begin{remark}
%In general, we only have a \emph{weak} grading:
%\begin{eqnarray*}
%S_1 + S_j & \subseteq & \hspace{1.7cm} S_{1+j} \ \cup \ S_{1+j+1} \;\; \textrm{ for } j \ge 1 \\
%S_i + S_j & \subseteq & S_{i+j-1} \ \cup \  S_{i+j} \ \cup \ S_{i+j+1} \;\;\; \textrm{ for  } i,j \ge 2.
%\end{eqnarray*}
%\end{remark}
%
%
%\begin{notation} For all $i \ge 0$, we set
%$$
%s_i \, = \, |S_i|.
%$$
%\end{notation}
%In particular, we have
%$$
%\begin{array}{c}
%1\,=\,s_0 \,\le\, s_1 \,\le\, \dots \,\le\, s_{q-1} \,< \, s_q \,=\, m, \\ \\
%|L| \,=\, 1 + s_1 + \dots + s_{q-1}.
%\end{array}
%$$
\begin{proposition}\label{S_i+S_j} For all $i,j \ge 1$, we have a \emph{weak grading} as follows:
\begin{eqnarray*}
S_1 + S_j & \subseteq & \hspace{1.5cm} S_{1+j} \ \cup \ S_{1+j+1} \ \ \textrm{ for } j \ge 1, \\
S_i + S_j & \subseteq & S_{i+j-1} \ \cup \  S_{i+j} \ \cup \ S_{i+j+1} \ \ \ \textrm{ for  } i,j \ge 2.
\end{eqnarray*}
%$$S_i+S_j \ \subseteq \ S_{i+j-1} \, \mathbin{\dot{\cup}} \, S_{i+j} \, \mathbin{\dot{\cup}} \,  S_{i+j+1}.$$
\end{proposition}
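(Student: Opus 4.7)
The plan is to reduce both inclusions to interval arithmetic. Two ingredients suffice: the closure of $S$ under addition, which gives $S_i + S_j \subseteq S$, together with the definition $S_k = S \cap I_k$. So I only need to locate the sum $x+y$ within the partition $(I_k)_k$ of $\Z$, by showing it lies in the claimed short union of consecutive intervals.

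For the second inclusion, I would take $x \in S_i$, $y \in S_j$ with $i, j \geq 2$. The inclusions $S_i \subseteq I_i$ and $S_j \subseteq I_j$ give $(i+j)m - 2\rho \leq x + y < (i+j+2)m - 2\rho$. Since $0 \leq \rho < m$, this range sits inside $[(i+j-1)m - \rho, (i+j+2)m - \rho[$, which is exactly $I_{i+j-1} \cup I_{i+j} \cup I_{i+j+1}$; intersecting with $S$ finishes this case.

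For the first inclusion, my plan is to exploit the sharper bound $S_1 \subseteq [m, 2m - \rho[$, already noted before the proposition as a consequence of $\min S^* = m$. This lifts the lower endpoint of $x + y$ from $(j+1)m - 2\rho$ up to $(j+1)m - \rho$, the left endpoint of $I_{j+1}$, so the translate $I_j$ drops out. Together with the upper bound $(j+3)m - 2\rho \leq (j+3)m - \rho$, the sum $x+y$ is confined to $I_{j+1} \cup I_{j+2}$, as desired.

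I do not anticipate a real obstacle; the argument is routine interval arithmetic once the ranges are written out. The only point worth flagging is that the refined bound $x \geq m$ for $x \in S_1$ (as opposed to the naive $x \geq m - \rho$) is exactly what delivers the two-term rather than three-term conclusion in the first inclusion; without it, one would only get the three-term bound $S_1 + S_j \subseteq S_j \cup S_{j+1} \cup S_{j+2}$, matching the general case.
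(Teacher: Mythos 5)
Your proposal is correct and follows essentially the same route as the paper: both arguments are endpoint arithmetic on the intervals $I_k$ using $0 \le \rho < m$, and both obtain the sharper two-term conclusion for $S_1+S_j$ from the refined bound $\min S_1 = m$ (the paper phrases this as $(S_1+S_j)\cap S_j = \emptyset$ and then quotes the three-term inclusion, while you recompute the endpoints directly, but the content is identical). No gaps.
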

\begin{proof} For $i,j \ge 1$, we have
$$(im-\rho)+(jm-\rho) \ = \ (i+j)m-2\rho \ > \ (i+j-1)m-\rho.$$
Similarly, we have
$$((i+1)m-\rho-1)+((j+1)m-\rho-1) \ < \ (i+j+2)m-\rho-1.$$
This settles the second inclusion. Assume now $i=1$. Since $\min S_1 = m$ and $m+S_j \,\subseteq\, S_{j+1}$, we have
$$
(S_1+S_j) \cap S_j \ = \ \emptyset.
$$
The first inclusion now follows from the second one.
\end{proof}
When the above weak grading happens to be a true grading up to level $q-1$, more precisely if 
$$
S_i + S_j \ = \ S_{i+j}
$$
for all $i,j \ge 0$ such that $i+j \le q-1$, Wilf's conjecture can be shown to hold in this instance. See Theorem~\ref{graded}.

\bigskip

The following estimate, limiting the size of $(S_i+S_j) \,\cap\, S_{i+j-1}$ by $\rho=\rho(S)$, will play a somewhat subtle role later on.

\begin{proposition}\label{intersections} For all $i,j \ge 1$, we have
\begin{eqnarray*}
|(S_i+S_j) \,\cap\, S_{i+j-1}| & \le & \rho, \\
|(S_i+S_j) \,\cap\, S_{i+j+1}| & \le & m-\rho-1.
\end{eqnarray*}
\end{proposition}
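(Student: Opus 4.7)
The plan is to prove both estimates by a direct interval comparison, using only the elementary fact that any $z \in S_k$ satisfies $km-\rho \le z \le (k+1)m-\rho-1$, which follows from the containment $S_k \subseteq I_k = [km-\rho,(k+1)m-\rho[$. No structural property of $S$ beyond this is needed; the argument is really about the translates $I_j$ of $I_q$.

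First I would fix arbitrary $x \in S_i$, $y \in S_j$ and record the two one-sided bounds produced by $I_i$ and $I_j$: summing the left endpoints gives $x+y \ge (i+j)m-2\rho$, and summing the right endpoints gives $x+y \le (i+j+2)m-2\rho-2$. Hence every element of $S_i+S_j$ lies in the integer interval $[(i+j)m-2\rho,\,(i+j+2)m-2\rho-2]$. This already reproves the weak grading of Proposition~\ref{S_i+S_j}, but it is the precise endpoints, not just the three-level containment, that will give the two stated bounds.

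For the first inequality, intersecting the range above with $I_{i+j-1}=[(i+j-1)m-\rho,\,(i+j)m-\rho[$ leaves only $[(i+j)m-2\rho,\,(i+j)m-\rho-1]$, an integer interval of cardinality $\rho$; so $|(S_i+S_j)\cap S_{i+j-1}| \le \rho$. For the second inequality, intersecting with $I_{i+j+1}=[(i+j+1)m-\rho,\,(i+j+2)m-\rho[$ leaves only $[(i+j+1)m-\rho,\,(i+j+2)m-2\rho-2]$, an integer interval of cardinality $m-\rho-1$; and this interval is empty precisely when $\rho=m-1$, in which case the stated bound $m-\rho-1=0$ is still valid.

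There is no serious obstacle here — the proof is purely arithmetic on interval endpoints. The only bookkeeping to be careful about is the half-open convention from Section~\ref{notation} (so that an interval $[a,b]$ of integers contains $b-a+1$ elements) and the observation that $\rho \in [0,m[$ ensures $m-\rho-1 \ge 0$. The slight asymmetry — the first bound uses the \emph{left} endpoints of $I_i, I_j$ while the second uses the \emph{right} endpoints — is what will later make $\rho$ versus $m-\rho-1$ a genuinely useful dichotomy in the applications to Wilf's conjecture.
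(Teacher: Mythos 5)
Your proposal is correct and follows essentially the same route as the paper: bound $S_i+S_j$ inside the integer interval $[(i+j)m-2\rho,\,(i+j+2)m-2\rho-2]$ by adding the endpoints of $I_i$ and $I_j$, then intersect with $I_{i+j-1}$ and $I_{i+j+1}$ and count. The endpoint arithmetic and the cardinality counts (including the degenerate case $\rho=m-1$) all check out.
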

\begin{proof}
We have
$$
S_i+S_j \ \subseteq \ [(i+j)m-2\rho, (i+j+2)m-2\rho-1[.
$$
It follows that
\begin{eqnarray*}
(S_i+S_j) \,\cap\, S_{i+j-1} & \subseteq & [(i+j)m-2\rho, (i+j)m-\rho[\\
(S_i+S_j) \,\cap\, S_{i+j+1} & \subseteq & [(i+j+1)m-\rho, (i+j+2)m-2\rho-1[. \qedhere
\end{eqnarray*}
\end{proof}

\medskip

\subsection{The profile of a numerical semigroup}
It is useful to record how many primitive elements there are in the various levels $S_j$. 

\begin{notation}{ }
For $j \ge 1$, let
$$
\begin{array}{cccccc}
P_j & = & P \cap S_j, &  p_j & = & |P_j|, \\
D_j & = & D \cap S_j, &  d_j & = & |D_j|.
\end{array}
$$
\end{notation}

\medskip

Note that $p_1 \ge 1$ since $m \in P_1$. Note also that $S_1=P_1$, i.e. $D_1 = \emptyset$, as $x \in D$ implies $x \ge 2m$.

\medskip

\begin{definition} The \emph{profile} of $S$ is the $(q-1)$-uple
$$
(p_1,\dots,p_{q-1}) \ \in \ \N^{q-1}.
$$
\end{definition}

\medskip

It may be shown that any $(p_1,\dots,p_{q-1}) \in \N^{q-1}$ with $p_1 \ge 1$ is the profile of a suitable numerical semigroup $S$. For constructing such an $S$, one should start with $m(S) \ge p_1+\dots+p_{q-1}$ at the very least, but the larger the difference $m-\sum p_i$ is, the more room there is for the construction of $S$. For instance, one may start with $P_1=[m,m+p_1[$, $P_2=[2(m+p_1), 2(m+p_1)+p_2[$, and so on.

\subsection{Left and right primitives}
Among the primitive elements of the numerical semigroup $S$, we distinguish the \textit{left ones}, namely those smaller than $c$, and the \textit{right ones}, those contained in $[c,c+m[$. That is, the left primitives are the elements of $P \cap L$, and the right ones are those belonging to $P_q = P \cap I_q$. This covers all of $P$, since $P \subseteq [m,c+m[ \, \subseteq L \cup I_q$.

Note that the right primitives are entirely determined by the left ones together with $c$, in the following sense. In $S_q=I_q$, all decomposable elements are sums of left primitives only. Thus, the right primitives are those elements in $I_q$ which are not attained by sums of left primitives. That is, we have
$$
P_q \ = \ I_q \setminus D.
$$
Or equivalently,
\begin{equation}\label{P cap L}
S \ = \ \vs{P \cap L} \cup [c, \infty[,
\end{equation}
since $P_q = P \cap [c,\infty[$. This specificity of $P_q$ was our reason not to include its cardinality $p_q$ in the profile $(p_1,\dots,p_{q-1})$ of $S$. Incidentally, note that $p_q$ is 
the \emph{down degree} of the vertex $S$ in the tree of all numerical semigroups. (See e.g. \cite{Br08, Br09, RG}.)

\medskip

The description of $S$ by \eqref{P cap L} justifies introducing a specific notation.
\begin{notation} For any nonempty subset $A \subseteq \N^*$ and $c \in \N^*$, we set
$$
\vs{A}_c \ =\ \vs{A} \cup [c,\infty[ \, \ =\ \vs{A \cup [c,c+m[},
$$
where $m = \min A$. It is a numerical semigroup of multiplicity at most $m$ and conductor at most $c$.
\end{notation}

For example, consider the numerical semigroup
$$
S \ = \ \vs{10,15}_{23} \ = \ \vs{10,15} \cup [23, \infty[.
$$
Its left primitives are 10 and 15 and its conductor is $23$. We have $q = \lceil 23/10 \rceil = 3$, and the decomposable elements in $S_3=[23,33[$ are 25 and 30. Therefore, the right primitives in $S$ are 23,24,26,27,28,29,31,32. That is, we have
$$
\vs{10,15}_{23} \ = \ \vs{10,15,23,24,26,27,28,29,31,32}.
$$
Note that the conductor of the semigroup $S=\vs{A}_c$ may occasionally be strictly smaller than $c$. This happens exactly when $S'=\vs{A}$ is itself a numerical semigroup (equivalently, when $\gcd(A) = 1$) whose conductor $c(S')$ is strictly smaller than  $c$. In that case, we simply have $\vs{A}_c \ = \ \vs{A}$. For instance, we have $\vs{3,5}_{10}= \vs{3,5}_{8}=\vs{3,5}$ with conductor 8, and $\vs{3,5}_{7}=\vs{3,5,7}=\vs{3}_5$ with conductor 5.

\subsection{The constant $W_0(S)$}

The number $p_q$ of right primitives is involved in two terms in the formula $W(S) =  |P||L|-c = |P||L|-qm+\rho$. Indeed, we have 
\begin{eqnarray*}
|P| & = & |P \cap L| + p_q, \\
m & = &p_q + d_q,
\end{eqnarray*}
since $m = \big|[c,c+m[\big| = |I_q| = p_q + d_q$. Factoring out $p_q$ from $W(S)$ gives rise to the following closely related constant.

\begin{definition} Let $S$ be a numerical semigroup. We denote
$$
W_0(S) \ = \ |P \cap L| |L|-q d_q+\rho.
$$
\end{definition}
As a side remark, note that $|P \cap L| = p_1+\dots+p_{q-1}$, the sum of the entries of the profile of $S$.  By construction, we have
\begin{equation}\label{W vs W0}
W(S) \ = \ p_q(|L|-q) + W_0(S).
\end{equation}

\begin{proposition}\label{prop W vs W0} Let $S$ be a numerical semigroup. Then $$W(S) \ \ge \ W_0(S).$$ In particular, if $W_0(S) \ge 0$, then $S$ satisfies Wilf's conjecture.
\end{proposition}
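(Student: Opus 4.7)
The plan is to derive the inequality directly from the identity \eqref{W vs W0}, namely $W(S) = p_q(|L|-q) + W_0(S)$. Rearranging gives $W(S) - W_0(S) = p_q(|L|-q)$, so the first assertion of the proposition reduces to showing that $p_q(|L|-q) \ge 0$. Since $p_q = |P_q|$ is a cardinality, $p_q \ge 0$ is automatic, and everything collapses to establishing the lower bound $|L| \ge q$.

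To obtain $|L| \ge q$, I would combine Lemma~\ref{formula for |L|}, which writes $|L| = 1 + |S_1| + \cdots + |S_{q-1}|$, with the straightforward observation recorded just before that lemma: for every $j \ge 0$ the element $jm$ belongs to $S_j$. Indeed, $jm \in S$ because $m \in S$ and $S$ is closed under addition, and $jm \in I_j = [jm-\rho,(j+1)m-\rho[$ because $0 \le \rho < m$ by Notation~\ref{q,rho}. Hence each $|S_j| \ge 1$ for $j=0,1,\dots,q-1$, and summing gives $|L| \ge q$ as required.

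The second statement follows immediately: if $W_0(S) \ge 0$ then $W(S) \ge W_0(S) \ge 0$, which is precisely Wilf's conjecture in the equivalent form $W(S)\ge 0$ recalled above. The argument is purely formal and I do not foresee any real obstacle; the only small point requiring care is the inclusion $jm \in I_j$, which relies on the strict inequality $\rho < m$ built into the definition of $q$ and $\rho$.
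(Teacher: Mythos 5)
Your argument is correct and is essentially the paper's own proof: both reduce the claim via the identity $W(S)=p_q(|L|-q)+W_0(S)$ to the bound $|L|\ge q$, which the paper gets from $L\supseteq\{0,m,\dots,(q-1)m\}$ and you get from the equivalent observation that $jm\in S_j$ for $0\le j\le q-1$. No gaps.
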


\begin{proof}
We have $|L| \ge q$ since \ $L \supseteq \{0,m,\dots,(q-1)m\}$. The stated inequality now follows from \eqref{W vs W0}.
\end{proof}

\medskip

As an application, we will settle Wilf's conjecture for $q=3$ precisely by showing that the stronger inequality $W_0(S) \ge 0$ always holds in this case. 

%For instance, we will show that all numerical semigroups with $q = 3$ satisfy Wilf's conjecture by actually satisfying the stronger inequality $W_0(S) \ge 0$.

%However, for $q \ge 4$, there exist rare cases where that stronger inequality fails to holds. The first such examples were discovered by computer by Jean Fromentin, who found exactly four such examples up to genus 60.

\begin{remark}\label{W0 ge 0} The inequality $W_0(S) \ge 0$ is equivalent to the fact that $d_q$, the number of decomposables in $I_q = [c,c+m[$, is bounded above as follows:
$$
q d_q \ \le \ |P \cap L||L| + \rho.
$$
\end{remark}

\subsection{$W_0(S)$ may be negative}

While the inequality $W_0(S) \ge 0$ will be shown to hold for $q \le 3$, it no longer holds in general for $q \ge 4$. The first counterexamples were discovered by Jean Fromentin \cite{F}, who showed by exhaustive computer search that all the $33,474,094,027,610$ numerical semigroups $S$ of genus $g \le 60$ do satisfy $W_0(S) \ge 0$ \textit{except in exactly five instances}, namely
$$
\vs{14, 22, 23}_{56}\, , \ \ \vs{16, 25, 26}_{64}\, , \ \ \vs{17, 26, 28}_{68}\, , \ \ \vs{17, 27, 28}_{68}\, \ \mbox{ and } \ \vs{18, 28, 29}_{72}
$$
of genus 43, 51, 55, 55 and 59, respectively. These sole counterexamples up to genus 60  all satisfy $W_0(S)=-1$, $c=4m$ and $W(S) \ge 35$. As a corollary \cite{FH}, it follows that Wilf's \emph{conjecture is true up to genus 60}.

\medskip
The case $W_0(S) < 0$ seems to be very rare indeed. An interesting problem would be to characterize all numerical semigroups $S$ belonging to it.

%If true, it might help show that the only $S$ attaining $W(S)=0$ should be \\
%\smallskip
%$\bullet$ those with $|P|=2$, i.e. $S = \vs{a,b}$\\
%$\bullet$ those with $L = \{0,1,\dots,q-1\}m$. 
%
%\bigskip
%
%Indeed, as $W(S) = p_q(|L|-q) + W_0(S)$, we have $W(S) > W_0(S)$ unless
%\begin{itemize}
%\item $p_q=0$, i.e. $S$ is a leaf in the numerical semigroups tree
%\item or $|L|=q$, meaning $L =\{0,1,\dots,q-1\}m$.
%\end{itemize}
%
%\bigskip

%In view of Proposition~\ref{W vs W0}, one might wonder whether the stronger inequality $W_0(S) \ge 0$ might hold for all numerical semigroups $S$. 
%
%
%It does indeed hold for $q \le 3$. But it turns out to fail for $q = 4$. Up to genus $g \le 60$, there are exactly 4 counterexamples, as discovered by Jean Fromentin. They all satisfy $q= 4$ and $W_0(S)=-1$.

%\bigskip
%
%Or, more generally, whether for $q$ fixed, there is an absolute lower bound on $W_0(S)$. Anyway, it would be extremely interesting to characterize the cases where $W_0(S) < 0$. Is there a plausible conjecture about that?

%%%%%%%%%%%%%%%%%%%%%%%%%%%%%%%%%%%%%%%%%%%
\subsection{The case $q=2$}
It was shown in \cite{K} that Wilf's conjecture holds for $q = 2$, i.e. in case $m < c \le 2m$. Here is a short proof of a slightly stronger statement.
\begin{proposition} Let $S$ be a numerical semigroup with $q=2$, i.e.  with $c=2m-\rho$ and $\rho \in [0,m-1[$. Then 
$$W_0(S) \ \ge \ \rho \ \ge \ 0.$$
\end{proposition}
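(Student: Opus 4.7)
The plan is to unwind $W_0(S)$ when $q=2$ and control $d_2$ by a sumset bound on $P_1+P_1$.

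First I would compute each ingredient of $W_0(S)=|P\cap L|\,|L|-qd_q+\rho$ in this case. By Lemma~\ref{formula for |L|}, $L=S_0\dot\cup S_1=\{0\}\dot\cup S_1$, and since $D_1=\emptyset$ we have $S_1=P_1$. Hence $|L|=1+p_1$ and $|P\cap L|=p_1$. Substituting $q=2$ gives
$$
W_0(S)\ =\ p_1(p_1+1)-2d_2+\rho,
$$
so it suffices to prove $2d_2\le p_1(p_1+1)$.

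Next I would establish the key structural observation: every decomposable in $I_2$ is a sum of two elements of $P_1$. Indeed, if $x\in D_2$ then $x=x_1+x_2$ with $x_1,x_2\in S^*$, so $m\le x_i$. Since $x<c+m=3m-\rho$, each $x_i< 2m-\rho=c$; and since $q=2$, the only elements of $S^*\cap[0,c[$ are those of $S_1=P_1$. Therefore $D_2\subseteq P_1+P_1$.

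The sumset $P_1+P_1$ has at most $\binom{p_1+1}{2}=p_1(p_1+1)/2$ elements (counting unordered pairs from $P_1$), so
$$
d_2\ \le\ |P_1+P_1|\ \le\ \binom{p_1+1}{2}\ =\ \frac{p_1(p_1+1)}{2},
$$
whence $2d_2\le p_1(p_1+1)$ and $W_0(S)\ge\rho\ge 0$. There is no real obstacle here: the only slightly delicate point is noticing that, for $q=2$, the width of $I_2$ forces both summands of a decomposable to sit below $c$, i.e.\ in $P_1$; once this is observed, the result reduces to the trivial bound on the size of a sumset of multisets of size two.
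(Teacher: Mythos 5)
Your proof is correct and follows essentially the same route as the paper: compute $|L|=1+p_1$, reduce to the bound $d_2\le p_1(p_1+1)/2$, and obtain that bound from the fact that every decomposable in $I_2$ is a sum of two elements of $P_1$. The only difference is that you spell out the justification of that last fact (the width of $I_2$ forces both summands below $c$), which the paper leaves implicit; your added detail is accurate.
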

\begin{proof}
Let $k = p_1$. Then $|L|=1+k$, since $L = S_0 \mathbin{\dot{\cup}} S_1 = \{0\} \mathbin{\dot{\cup}} P_1$ here. Now
%$$
%W_0(S) - \rho  =  |P \cap L||L|-2d_2 = k(1+k) -2 d_2.
%$$
\begin{eqnarray*}
W_0(S)-\rho & = & |P \cap L||L|-2d_2\\
& = & k(1+k) -2 d_2.
\end{eqnarray*}
But
$$
d_2 \ \le \ k(k+1)/2,
$$
since any decomposable element in $S_2=[c,c+m[$ is a sum of two primitives in $P_1$. Therefore $W_0(S) - \rho \ge 0$.
\end{proof}

%\begin{proposition} Assume $q=3$. Let $S$ with profile $(p_1,p_2) \in \N^2$. Let
%$$
%S' = \vs{P_1} \cup [c, \infty[.
%$$
%Then $S' \subseteq S$, it has same $m,c$ as $S$ and has profile $(p_1,0)$. \\ 
%If \ $W_0(S') \ge \rho$ \ then \ $W_0(S) \ge 0$.
%\end{proposition}
%

\section{Ap\'ery elements} \label{apery}
Throughout this section again, $S$ denotes a numerical semigroup with multiplicity $m$, conductor $c$ and associated constants $q, \rho$. We shall set up formulas for $|L|$ and $d_q$ involving Ap\'ery elements with respect to $m=m(S)$, in the spirit of those of Selmer~\cite{Se}.

\begin{definition} An \emph{Ap\'ery element} (with respect to $m$) is an element $x \in S$ such that $x-m \notin S$. We shall denote by $X \subset S$ the set of all Ap\'ery elements of $S$.
\end{definition}

Note that a common notation for $X$ is Ap$(S,m)$. It follows from the definition that $X$ is contained in $[0,c+m[$ and contains both extremities $0$ and $c+m-1$. Moreover, we have $|X| = m$. Indeed, for every class $\lambda$ mod $m$, there is a unique $a \in X$ of class $\lambda$, namely the smallest element of that class in $S$. Note also that 
$$P \setminus \{m\} \ \subseteq \ X,$$ since clearly a primitive element cannot belong to $m+S$, except $m$ itself.

\begin{notation} We denote by $N \subset S$ the set of non-Ap\'ery elements, i.e. 
$N = S \setminus X$.
\end{notation}
For example, we have $m \in N$. It is clear that $S+N \subseteq N$. Note also that $N$ and $X$ may equivalently be described as $N = m + S$ and $X = S \setminus N$.

%
%\subsection{From $D_q$ to $|L|$}
%We may recover $|L|$ by listing the elements of $D_q$ in a proper way. 
%
%\begin{proposition}\label{from D_q} Write the elements of $D_q$ in the form
%$$
%D_q = \{qm\} \mathbin{\dot{\cup}} \big((q-1)m+B_1\big)  \mathbin{\dot{\cup}} \big((q-2)m+B_2\big) \mathbin{\dot{\cup}} \dots \mathbin{\dot{\cup}} \big(B_q\big).
%$$
%Then
%$$
%|L| = q + (q-1)|B_1| + (q-2)|B_2| + \dots + |B_{q-1}|.
%$$
%\end{proposition}
%
%We revisit this idea in a cleaner way in the next section. The $B_i$ are simply the ap\'erys at level $i$, for $i \le q-1$, or the decomposable ap\'erys at level $q$ for $i=q$.

\begin{notation} For all $0 \le j \le q$, we denote
$$
X_j \ = \ X \cap S_j.
$$
\end{notation}
%
%Let $X \subset S$ denote the set of Ap\'ery elements in $S$, and let $N = S \setminus X$. Thus, $|X| = m$, and $S+N \subseteq N$. For all $0 \le i \le q$, we denote
%$$
%X_i \ = \ \left\{
%\begin{array}{ll}
%X \cap S_i & \mbox{ if } \ i \le q-1, \\
%X \cap D_q & \mbox{ if } \ i = q.
%\end{array}
%\right.
%$$
For instance, we have
$$
X_0 \ = \ \{0\}, \quad X_1 \ = \ S_1 \setminus \{m\}, \quad X_2 \ \subseteq \ 2X_1 \mathbin{\dot{\cup}} P_2.
$$

\subsection{A formula for $W_0(S)$}
Here is a useful formula for $W_0(S)$ in terms of the cardinalities of the $X_i$'s.

\begin{notation} For $0 \le i \le q$, we denote
$$
\alpha_i \ = \ \left\{
\begin{array}{ll}
|X_i| & \mbox{ if } \ i \le q-1, \\
|X_q \setminus P| & \mbox{ if } \ i = q.
\end{array}
\right.
$$
\end{notation}
In particular, if $q \ge 2$, we have 
\begin{equation}\label{alpha_i ge p_i}
\alpha_0 \ = \ 1, \quad \alpha_1 \ = \ p_1-1, \quad \alpha_i \ \ge \ p_i \ \ \ \mbox{ for all } \ 2 \le i \le q-1,
\end{equation}
since all primitives except $m$ are Ap\'ery elements. But note that $\alpha_q$ only counts the \emph{decomposable} Ap\'ery elements in $S_q$, ignoring $P_q$. Since $|X|=m$ and since $X_q \setminus P$ may be a strict subset of $X_q$, we have
$$
\alpha_0+\alpha_1+\dots+\alpha_q \ \le \ m.
$$
We now identify the left-hand sum with $d_q=|D_q|$.

\begin{proposition}\label{formulas} Let $S$ be a numerical semigroup. We have
\begin{eqnarray}
d_q & = & \sum_{i=0}^{q} \alpha_i, \label{d_q} \label{d_q} \\
|L(S)| & = & \sum_{i=0}^{q-1} (q-i) \alpha_i. \label{aperys in L}
\end{eqnarray}
\end{proposition}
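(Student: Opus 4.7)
The plan is to exploit the Apéry decomposition: every $x \in S$ factors uniquely as $x = a + km$ with $a \in X$ and $k \in \N$, because the Apéry representative $a$ is by definition the least element of its class modulo $m$ in $S$. This gives a bijection $S \longleftrightarrow X \times \N$, and I will translate it into the language of the partition $\{I_j\}$.

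First I would observe that $X \subseteq [0, c+m[$, since the largest Apéry element is the Frobenius number plus $m$, equal to $c+m-1$. Hence $X = X_0 \sqcup X_1 \sqcup \cdots \sqcup X_q$ and $\sum_{i=0}^q |X_i| = |X| = m$. Because each $I_j$ is the translate of $I_0$ by $jm$, the factorization above restricts, for every $j \ge 0$, to a bijection
\[
\bigsqcup_{i=0}^{\min(j,q)} X_i \ \longrightarrow \ S_j, \qquad a \in X_i \ \longmapsto \ a + (j-i)m,
\]
valid because $a \in I_i$ forces $a + (j-i)m \in I_j$. In particular, $|S_j| = \sum_{i=0}^{j} |X_i|$ for $0 \le j \le q-1$, while $|S_q| = |I_q| = m$.

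The second identity is then immediate: by Lemma~\ref{formula for |L|} and swapping the order of summation,
\[
|L| \ =\ \sum_{j=0}^{q-1} |S_j| \ =\ \sum_{j=0}^{q-1}\sum_{i=0}^{j} \alpha_i \ =\ \sum_{i=0}^{q-1}(q-i)\alpha_i.
\]
For the first identity, I decompose $S_q = P_q \sqcup D_q$, so $d_q = m - p_q$. Assuming $q \ge 2$, every element of $P_q$ exceeds $m$ (as $c \ge 2m - \rho > m$) and is therefore Apéry, whence $P_q \subseteq X_q$ and $|X_q| = p_q + \alpha_q$. Substituting this into the identity $m = \sum_{i=0}^q |X_i|$ gives $d_q = m - p_q = \sum_{i=0}^{q-1}\alpha_i + \alpha_q = \sum_{i=0}^q \alpha_i$.

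No step presents a real obstacle; the only point requiring care is the geometric bijection between $S_j$ and $\bigsqcup_{i=0}^{\min(j,q)} X_i$, which is routine given the explicit description $I_j = [jm-\rho,\,(j+1)m-\rho[$ and the uniqueness of the Apéry representative of each class modulo $m$.
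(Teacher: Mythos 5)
Your proof is correct and follows essentially the same route as the paper: your closed-form bijection $S_j \cong \bigsqcup_{i=0}^{\min(j,q)} X_i$ is just the unrolled version of the paper's one-step recursion $S_i = (m+S_{i-1}) \mathbin{\dot{\cup}} X_i$, and you obtain the first identity exactly as the paper does, by comparing $m = |X| = \sum_i |X_i|$ with $m = |S_q| = p_q + d_q$. You are right (and more explicit than the paper) to flag that $q \ge 2$ is needed for $P_q \subseteq X_q$: when $q=1$ one has $m \in P_q \setminus X$, and formula \eqref{d_q} indeed fails in that trivial case.
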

\begin{proof}
On the one hand, we have
$$
m \ = \ |X| \ = \ \sum_{i=0}^{q} |X_i| \ = \ \sum_{i=0}^{q-1} \alpha_i + (\alpha_q+p_q).
$$
On the other hand, we have $m = |S_q| = p_q+d_q$. Comparing both expressions of $m$ yields formula~\eqref{d_q}. Now, by definition of the Ap\'ery elements, for $1 \le i \le q-1$ we have
$$
S_i \ = \ (m+S_{i-1}) \,\mathbin{\dot{\cup}}\, X_i,
$$
%\begin{equation}\label{Si}
%S_i \ = \ (m+S_{i-1}) \,\mathbin{\dot{\cup}}\, X_i,
%\end{equation}
and hence
\begin{equation}\label{alpha_i}
|S_i| \ = \ |S_{i-1}|+\alpha_i.
\end{equation}
Since $|L| = |S_0| + |S_1| + \dots + |S_{q-1}|$, it follows by a repeated application of \eqref{alpha_i} that 
$$
|L| \ = \ q + (q-1)\alpha_1+ \dots + \alpha_{q-1},
$$
as desired. \end{proof}
%
%========== older =============
%
%\begin{proof}
%By definition of the Ap\'ery elements, for $1 \le i \le q-1$ we have
%\begin{equation}\label{Si}
%S_i \ = \ (m+S_{i-1}) \,\mathbin{\dot{\cup}}\, X_i
%\end{equation}
%and hence
%\begin{equation}\label{alpha_i}
%|S_i| \ = \ |S_{i-1}|+\alpha_i.
%\end{equation}
%We also have
%\begin{equation}\label{D_q}
%D_q \ = \ (m+S_{q-1}) \,\mathbin{\dot{\cup}}\, (X_q \cap D_q).
%\end{equation}
%By \eqref{D_q} and a repeated application of \eqref{Si}, it follows that
%$$
%D_q \ = \ \{qm\} \mathbin{\dot{\cup}} \big((q-1)m+X_1\big) \mathbin{\dot{\cup}} \dots \mathbin{\dot{\cup}} \big(m+X_{q-1}\big) \mathbin{\dot{\cup}} (X_q \cap D_q).
%$$
%Therefore, we have
%$$
%d_q \ = \ \alpha_0 + \alpha_1 + \dots + \alpha_{q-1}+\alpha_q,
%$$
%since $X_q \cap D_q = X_q \setminus P$. Now, since $|L| = |S_0| + |S_1| + \dots + |S_{q-1}|$, it follows by a repeated application of \eqref{alpha_i} that 
%$$
%|L| \ = \ q + (q-1)\alpha_1+ \dots + \alpha_{q-1},
%$$
%as desired.
%\end{proof}
\begin{corollary} We have
$$
W_0(S)-\rho \ = \ \big(\sum_{i=0}^{q-1} p_i\big)\big(\sum_{i=0}^{q-1} (q-i) \alpha_i\big)-q\sum_{i=0}^{q} \alpha_i.
$$
\end{corollary}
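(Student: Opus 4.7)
The proof is a direct substitution, so the plan is essentially bookkeeping rather than any nontrivial argument.

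First I would unwind the definition of $W_0(S)$. By the definition given just before,
$$
W_0(S)-\rho \ = \ |P \cap L|\,|L| - q\,d_q,
$$
so the task reduces to rewriting each of the three factors $|P\cap L|$, $|L|$ and $d_q$ in the form stated on the right-hand side.

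For $|P\cap L|$, the observation made in the subsection on left and right primitives is that $P \subseteq [m,c+m[\, \subseteq L \cup I_q$, which gives the disjoint decomposition $P\cap L = P_1 \mathbin{\dot{\cup}} \cdots \mathbin{\dot{\cup}} P_{q-1}$; setting $p_0=0$ (since $S_0=\{0\}$ contains no primitive) lets us write this as $\sum_{i=0}^{q-1} p_i$, matching the first factor. For $|L|$ and $d_q$, the two identities of Proposition~\ref{formulas} already give exactly
$$
|L| \ = \ \sum_{i=0}^{q-1}(q-i)\alpha_i, \qquad d_q \ = \ \sum_{i=0}^{q}\alpha_i.
$$

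Plugging all three expressions into $W_0(S)-\rho = |P\cap L|\,|L| - q\,d_q$ immediately yields the claimed formula. There is no real obstacle here: the only thing to be careful about is the convention $p_0=0$ so that the first summation index can start at $0$ (purely a cosmetic alignment with the $\alpha_i$-sums), and the fact that $\alpha_q$ in the second sum counts only the \emph{decomposable} Ap\'ery elements of $S_q$, which is precisely what makes $\sum_{i=0}^q \alpha_i$ equal to $d_q$ rather than to something larger.
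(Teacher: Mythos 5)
Your proposal is correct and follows exactly the paper's (one-line) proof: substitute $|P\cap L|=\sum_{i=0}^{q-1}p_i$, $|L|=\sum_{i=0}^{q-1}(q-i)\alpha_i$ and $d_q=\sum_{i=0}^{q}\alpha_i$ into $W_0(S)-\rho=|P\cap L||L|-qd_q$. Your extra remarks on the convention $p_0=0$ and on $\alpha_q$ counting only decomposable Ap\'ery elements are accurate and merely make explicit what the paper leaves tacit.
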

\begin{proof} Straightforward from the formula $W_0(S)-\rho=|P \cap L||L|-qd_q$ and Proposition~\ref{formulas}.
\end{proof}

\section{The Hilbert function of standard graded algebras} \label{hilbert}

We now turn to standard graded algebras, Hilbert functions thereof, Macaulay's theorem, and a condensed version of it which is well-suited to our subsequent applications to Wilf's conjecture. We start by recalling a few basic definitions. In this section, the notation $[x,\infty[$ refers to the usual \textit{real} intervals.

\begin{definition} A \emph{standard graded algebra} is a commutative algebra $R$ over a field $\K$ endowed with a vector space decomposition $R=\oplus_{i\ge 0}\, R_i$ such that $R_0=\K$, $R_iR_j \subseteq R_{i+j}$ for all $i,j \ge 0$, and which is generated as a $\K$-algebra by finitely many elements in $R_1$.
\end{definition}
It follows from the definition that each $R_i$ is a finite-dimensional vector space over $\K$. Moreover, the fact that $R$ is generated by $R_1$ implies that $R_iR_j=R_{i+j}$ for all $i,j \ge 0$.
\begin{definition} Let $R=\oplus_{i\ge 0}\, R_i$ be a standard graded algebra. The \emph{Hilbert function} of $R$ is the map $i \mapsto h_i$ associating to each $i \in \N$ the dimension
$$
h_i \ = \ \dim_{\K} R_i
$$
of $R_i$ as a vector space over $\K$.
\end{definition}
In particular, we have $h_0=1$, and $R$ is generated as a $\K$-algebra by any $h_1$ linearly independent elements of $R_1$.

\subsection{Macaulay's theorem}

Macaulay's theorem rests on the so-called \emph{binomial representations} of integers. Here is some background information about them. 
\begin{proposition} Let $a \ge i \ge 1$ be positive integers. There are unique integers $a_i > a_{i-1} > \cdots > a_1 \ge 0$ such that 
$$
a = \sum_{j=1}^i \binom{a_j}j. 
$$
\end{proposition}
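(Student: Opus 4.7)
The plan is to reduce both existence and uniqueness to a single \emph{staircase bound}. First I would prove the auxiliary inequality
$$
\sum_{j=1}^i \binom{a_j}{j} \;\le\; \binom{a_i + 1}{i} - 1
$$
valid for any strictly decreasing sequence $a_i > a_{i-1} > \cdots > a_1 \ge 0$. The argument is to observe that the left-hand side is maximized, for fixed $a_i$, at $a_j = a_i - (i-j)$, and then to collapse the resulting telescoping sum $\binom{a_i}{i} + \binom{a_i-1}{i-1} + \cdots + \binom{a_i-i+1}{1}$ into $\binom{a_i+1}{i} - 1$ by repeated application of Pascal's identity $\binom{n+1}{k} = \binom{n}{k} + \binom{n}{k-1}$.

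For uniqueness I would induct on $i$, with base case $i=1$ trivial since $a = \binom{a_1}{1} = a_1$. Given two representations of $a$, their top indices must coincide: if one had $a_i < b_i$, the staircase bound would yield
$$
a \;\le\; \binom{a_i+1}{i} - 1 \;<\; \binom{a_i+1}{i} \;\le\; \binom{b_i}{i} \;\le\; a,
$$
a contradiction. Subtracting the common top term $\binom{a_i}{i} = \binom{b_i}{i}$ and invoking the inductive hypothesis at level $i-1$ then forces all remaining indices to agree.

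Existence would be proved by a greedy construction, again by induction on $i$, after strengthening the statement to allow any $a \ge 0$ (with the convention that $0$ is represented by $a_j = j-1$ for $1 \le j \le i$). I would take $a_i$ to be the largest nonnegative integer with $\binom{a_i}{i} \le a$; note $a_i \ge i-1$ since $\binom{i-1}{i} = 0$. Setting $a' = a - \binom{a_i}{i}$, maximality of $a_i$ gives $\binom{a_i+1}{i} > a$, whence
$$
a' \;<\; \binom{a_i+1}{i} - \binom{a_i}{i} \;=\; \binom{a_i}{i-1}.
$$
Applying the induction hypothesis at level $i-1$ to $a'$ yields $a_{i-1} > \cdots > a_1 \ge 0$ representing $a'$, and the strict inequality $a_{i-1} < a_i$ then follows from $\binom{a_{i-1}}{i-1} \le a' < \binom{a_i}{i-1}$ by monotonicity of $\binom{\cdot}{i-1}$ on $\{i-1, i, i+1, \ldots\}$, with the degenerate case $a_{i-1} < i-1$ handled separately by noting that then $a_{i-1} = i-2 < i-1 \le a_i$.

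The main obstacle I expect is purely bookkeeping around this degenerate regime: ensuring the induction runs smoothly when the remainder $a'$ is smaller than $i-1$ and the greedy choice at the lower level produces the trivial tail $a_j = j-1$, and checking that the staircase bound remains valid in the edge case $a_i = i-1$ (where both sides are $0$). Once these boundary points are cleared up, and the staircase inequality is in hand, the rest of the argument is mechanical.
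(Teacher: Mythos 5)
Your proof is correct. Note that the paper itself gives no argument for this proposition --- it simply cites \cite{BH, P} --- and what you have written is essentially the standard proof found in those references: the greedy choice of the leading index $a_i$ as the largest integer with $\binom{a_i}{i}\le a$, combined with the staircase bound $\sum_{j=1}^i\binom{a_j}{j}\le\binom{a_i+1}{i}-1$ (a telescoping consequence of Pascal's rule) to force uniqueness of the top index and hence, by downward induction, of the whole representation. Your strengthening to all $a\ge 0$, with $0$ represented by $a_j=j-1$, is exactly the right way to make the induction close, and the boundary checks you flag (the remainder $a'=0$, the case $a_i=i-1$) all go through as you describe.
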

\begin{proof} See e.g. \cite{BH, P}.
\end{proof}
This expression is called the $i$th \emph{binomial representation of $a$}.

\begin{notation} Let $a \ge i \ge 1$ be positive integers. Let
$ \displaystyle
a = \sum_{j=1}^i \binom{a_j}j
$
be its $i$\emph{th} binomial representation. We then denote
$ \displaystyle
a^{\vs{i}} = \sum_{j=1}^i \binom{a_j+1}{j+1}.
$
\end{notation}
%
%\begin{notation} Let $a \ge i \ge 1$ be positive integers. Let
%$$
%a = \sum_{j=1}^i \binom{a_j}j
%$$
%be its $i$\emph{th} binomial representation. We then denote
%$$
%a^{\vs{i}} = \sum_{j=1}^i \binom{a_j+1}{j+1}.
%$$
%\end{notation}
Note that the right-hand side is a valid $(i+1)$st binomial representation of some positive integer, namely of the integer it sums to.

\bigskip

Here is Macaulay's classical result which constrains the possible Hilbert functions of standard graded algebras \cite{M}.
\begin{theorem}\label{thm macaulay} Let $R = \oplus_{i \ge 0} R_i$ be a standard graded algebra over a field $\Bbb{K}$, with Hilbert function $h_i = \dim_{\Bbb{K}} R_i$ for all $i \ge 0$. Let $i$ be a positive integer. Then
$$
h_{i+1} \ \le \ h_i^{\vs{i}}.
$$
\end{theorem}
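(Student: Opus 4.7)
The plan is to reduce the statement to a purely combinatorial assertion about monomial ideals, and then invoke a Kruskal--Katona style shadow inequality. Throughout, we may enlarge $\K$ to an infinite field, since extending scalars preserves all graded dimensions.

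First, I would present $R$ as a quotient $T/I$ where $T=\K[x_1,\dots,x_n]$ is a polynomial ring with $n=h_1$ and $I \subseteq T$ is a homogeneous ideal. The given Hilbert function is then $h_i = \dim_{\K} T_i - \dim_{\K} I_i$, so it suffices to bound the growth of the standard pieces $T_i \setminus I_i$. The benefit is that multiplication by the ambient $T_1$ is fully understood, which lets one relate degree $i$ to degree $i+1$ combinatorially.

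Second, I would replace $I$ by a monomial ideal with the same Hilbert function. Fix reverse lexicographic order $\prec$ on $T$ and take the initial ideal $\operatorname{in}_{\prec}(I)$; this preserves Hilbert functions, as the standard monomials form a $\K$-basis of each graded piece of $T/I$. To extract further structure, I would use a generic change of variables and pass to the generic initial ideal $J=\operatorname{gin}(I)$. By the Galligo--Bayer--Stillman theorem, $J$ is Borel-fixed, meaning that if $x_j u \in J$ then $x_i u/x_j \in J$ for all $i < j$ dividing $x_ju$ appropriately. Borel-fixedness is the combinatorial counterpart of strong stability and lets one compare $J$ to lex-segment ideals.

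Third, I would prove the bound $h_{i+1} \le h_i^{\vs{i}}$ at the level of monomials. Let $N_i \subseteq T_i$ denote the set of standard monomials of $J$ in degree $i$, so $|N_i|=h_i$, and observe that $N_{i+1}$ is contained in the \emph{upper shadow} $\partial^+ N_i = \{x_k u : 1 \le k \le n,\ u \in N_i\} \cap T_{i+1}$. The crucial combinatorial claim is that among all subsets of $T_i$ of cardinality $h_i$, the lex-initial segment $L_i$ minimizes the size of its upper shadow, and that if $h_i$ is written in its $i$th binomial representation then the lex segment of size $h_i$ has upper shadow of size exactly $h_i^{\vs{i}}$. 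The inequality then follows from $|N_{i+1}| \le |\partial^+ N_i| \le |\partial^+ L_i| = h_i^{\vs{i}}$.

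The main obstacle is the minimization claim for lex segments, which is the monomial Kruskal--Katona (or Macaulay--Clements--Lindstr\"om) inequality. I would prove it by an iterated compression argument: define operators $C_k$ that replace a set of monomials by one that is ``more lex'' in the variable $x_k$, verify that each $C_k$ does not increase the upper shadow, and check that any set stable under all $C_k$ must be a lex segment. The bookkeeping with binomial representations -- in particular that the $\vs{i}$ operation correctly computes the shadow of a lex segment -- is the delicate part, and is handled by induction on $i$ and on the leading binomial coefficient $\binom{a_i}{i}$ in the representation.
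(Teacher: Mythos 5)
First, a remark on context: the paper does not prove this statement at all; it quotes it as Macaulay's classical theorem and points to \cite{M}, \cite{BH}, \cite{P} for proofs. So your attempt is to be measured against the standard textbook argument, which your overall strategy (reduce to a monomial initial ideal, then prove a combinatorial shadow inequality with lex segments as extremal sets) does follow in outline. However, the key combinatorial step, as you have set it up, is wrong.

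The containment $N_{i+1} \subseteq \partial^+ N_i$ is correct, but the chain $|N_{i+1}| \le |\partial^+ N_i| \le |\partial^+ L_i| = h_i^{\vs{i}}$ fails on two counts. The middle inequality points the wrong way relative to your own minimization claim: if lex segments \emph{minimize} the upper shadow, then $|\partial^+ N_i| \ge |\partial^+ L_i|$, which is useless here. Worse, the claimed identity $|\partial^+ L_i| = h_i^{\vs{i}}$ is false: a single monomial of degree $1$ in $n \ge 2$ variables has upper shadow of size $n$, whereas $1^{\vs{1}} = \binom{2}{2} = 1$; indeed $h_i^{\vs{i}}$ does not depend on $n$ while upper shadows of lex segments do. The upper shadow of the \emph{standard} monomials is simply the wrong object. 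The correct reduction applies the shadow minimization to the \emph{ideal} side: writing $J_i = J \cap T_i$, one has $T_1 \cdot J_i \subseteq J_{i+1}$, hence $h_{i+1} \le \dim_{\K} T_{i+1} - |\partial^+ J_i| \le \dim_{\K} T_{i+1} - |\partial^+ \Lambda_i|$, where $\Lambda_i$ is the lex-initial segment of size $|J_i|$ and the minimization is now used in the right direction; the identity $\dim_{\K} T_{i+1} - |\partial^+ \Lambda_i| = h_i^{\vs{i}}$ is then a (nontrivial but routine) binomial-representation computation relating the Macaulay representation of $|J_i|$ to that of its complement $h_i$. Equivalently, one should bound $N_{i+1}$ by the set of degree-$(i+1)$ monomials \emph{all} of whose degree-$i$ divisors lie in $N_i$, not by $\partial^+ N_i$. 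With that repair, and granting the compression argument (which is genuinely the hard part and which you only sketch), the proof becomes the standard one; note also that the generic initial ideal and Borel-fixedness are not needed for this version — any monomial initial ideal suffices — which is just as well, since in positive characteristic Borel-fixed ideals need not be strongly stable.
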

The converse also holds in Macaulay's theorem, but we shall not need it here. That is, satisfying these inequalities for all $i \ge 1$ \textit{characterizes} the Hilbert functions of standard graded algebras. See e.g. \cite{BH, MP,P}.

\medskip
For our applications to Wilf's conjecture, we shall derive from Macaulay's theorem a condensed version of it. To this end we first need some facts concerning binomial coefficients.

\subsection{Some binomial inequalities}
Given $i \in \N$ and $x \in \R$, we denote as usual
$$
\binom{x}{i} \ = \ \frac{x(x-1)\dots(x-i+1)}{i!}
$$
if $i \ge 1$, or else 1 if $i=0$. We shall repeatedly use the following well-known fact.
\begin{lemma}\label{bijection} Let $i \ge 1$ be an integer. Then the map
$\displaystyle x \mapsto \binom{x}{i}$ is an increasing continuous bijection (in fact, a homeomorphism) from $[i-1,\infty [$ to $[0,\infty[$. 
\end{lemma}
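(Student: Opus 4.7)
The lemma is a routine fact from real analysis, so my plan is short and direct. I would set $f(x) = \binom{x}{i} = \tfrac{1}{i!}\prod_{k=0}^{i-1}(x-k)$ and check the four ingredients: continuity, the boundary value $f(i-1)=0$, unboundedness as $x\to\infty$, and strict monotonicity on $[i-1,\infty[$.

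Continuity is immediate since $f$ is a polynomial in $x$ of degree $i$. Plugging in $x=i-1$ makes the last factor $(x-(i-1))$ vanish, so $f(i-1)=0$. Since $f$ has positive leading coefficient $1/i!$, we get $f(x)\to +\infty$ as $x\to\infty$, so in particular $f([i-1,\infty[) \supseteq [0,\infty[$ by the intermediate value theorem.

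The one substantive step is strict monotonicity. Applying the product rule,
$$
f'(x) \ =\ \frac{1}{i!}\sum_{k=0}^{i-1}\,\prod_{\substack{0\le j\le i-1\\ j\ne k}}(x-j).
$$
For $x>i-1$ every factor $x-j$ with $0\le j\le i-1$ is strictly positive, so each summand is positive and $f'(x)>0$. (At the endpoint $x=i-1$ all summands vanish except the $k=i-1$ term, which gives $f'(i-1)=1/i>0$, so in fact $f$ is strictly increasing on the closed interval $[i-1,\infty[$.) Combined with $f(i-1)=0$ and $f(x)\to\infty$, this shows $f$ is a strictly increasing continuous bijection from $[i-1,\infty[$ onto $[0,\infty[$.

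Finally, a strictly increasing continuous bijection between intervals of $\R$ automatically has a continuous inverse (standard real-analysis fact: the inverse of a strictly monotone continuous function on an interval is continuous), so $f$ is a homeomorphism. I do not foresee any real obstacle here; the only mild care is to note that the derivative argument works on the closed interval, not merely the open one, which justifies including the endpoint $i-1$ in the domain.
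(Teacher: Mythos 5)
Your proof is correct and follows essentially the same route as the paper: both reduce the lemma to showing $f'>0$ on $[i-1,\infty[$, the paper by using Rolle's theorem to locate the roots of $f'$ strictly between the consecutive roots $0,1,\dots,i-1$ of $f$ (hence all of them below $i-1$), and you by expanding $f'$ with the product rule and observing that every summand is positive there. Your extra details --- the value $f'(i-1)=1/i$ and the continuity of the inverse of a strictly monotone continuous map on an interval --- are correct and merely make explicit what the paper leaves implicit.
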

%\begin{lemma}\label{bijection} Let $i \ge 1$ be an integer. Then the map
%$$
%\begin{matrix}
%b_i \colon &  [i-1,\infty [ & \longrightarrow & [0,\infty[ \\
%& x  & \longmapsto & \binom{x}{i}
%\end{matrix}
%$$
%is a continuous increasing bijection (in fact, a homeomorphism). 
%\end{lemma}
\begin{proof} By Rolle's theorem, the derivative of the polynomial 
$f=X(X-1)\cdots(X-i+1)$ is of the form $f'=(X-\lambda_1)\cdots(X-\lambda_{i-1})$ where $j-1 < \lambda_j < j$ for all $1 \le j \le i-1$. Therefore $f$ induces an increasing continuous function from $[i-1,\infty [$ onto $[0,\infty[$.
\end{proof}
Consequently, given $i \ge 1$ and any real number $y \ge 0$, there is a unique real number $x \ge i-1$ such that
$$
y = \binom xi.
$$
Moreover, for any real numbers $u,v \ge i-1$, we have
\begin{equation}\label{bijectio}
u < v \ \ \Longleftrightarrow \ \ \binom ui < \binom vi.
\end{equation}
%We shall denote by
%$$
%\beta_i \colon [0,\infty[ \longrightarrow  [i-1,\infty [
%$$
%the inverse bijection. In particular, for all $y \ge 0$ and all $x \ge i-1$, we have
%$$
%y = \binom{\beta_i(y)}{i}, \quad \beta_i\big(\binom{x}{i}\big) = x.
%$$
The following result is due to Lov\'asz \cite{L}.

\begin{lemma}\label{lovasz} Let $r \ge 2$ be an integer, and let $u \ge v \ge w$ be real numbers such that $v \ge r-1$ and $w \ge r-2$. Assume
$ \displaystyle
\binom{u}{r} = \binom{v}{r} +\binom{w}{r-1}. 
$
Then
$ \displaystyle
\binom{u}{r-1} \le \binom{v}{r-1} +\binom{w}{r-2}. 
$
\end{lemma}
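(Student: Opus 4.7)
The approach begins with the identity $(x - r + 1)\binom{x}{r-1} = r\binom{x}{r}$ (valid as a polynomial identity in $x$), applied to $u$, $v$ and $w$, to convert both hypothesis and conclusion into polynomial relations in $u, v, w$ and $\Delta := \binom{u}{r-1} - \binom{v}{r-1}$. First, from $w \le v$ and the monotonicity of $\binom{\cdot}{r-1}$ on $[r-2, \infty)$ (Lemma~\ref{bijection}), we get $\binom{u}{r} \le \binom{v}{r} + \binom{v}{r-1} = \binom{v+1}{r}$, hence $u \in [v, v+1]$ and $\Delta \ge 0$. Applying the identity at $u$ and $v$ rewrites the hypothesis as
\[
(u - v)\binom{v}{r-1} + (u - r + 1)\,\Delta \;=\; r\binom{w}{r-1},
\]
while applying it in the form $(w - r + 2)\binom{w}{r-2} = (r - 1)\binom{w}{r-1}$ converts the target inequality into $(w - r + 2)\,\Delta \le (r - 1)\binom{w}{r-1}$.

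Eliminating $\binom{w}{r-1}$ between these two relations yields the equivalent form
\[
\bigl[\,r(w - r + 2) - (r - 1)(u - r + 1)\,\bigr]\,\Delta \;\le\; (r - 1)(u - v)\binom{v}{r-1}.
\]
If the bracket on the left is non-positive, then the left-hand side is $\le 0$ while the right-hand side is $\ge 0$, so the inequality is trivial; this disposes of the case where $u$ is sufficiently large relative to $w$.

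The main obstacle is the complementary regime $r(w - r + 2) > (r - 1)(u - r + 1)$, equivalently $(r-1)u < rw + 1$, which forces $u$ to lie close to $w$ and includes the extremal Pascal case $w = v$, $u = v + 1$ where the lemma holds with equality. Here crude convexity/chord estimates on $\Delta$ using $v$ and $v+1$ are just too loose to close the gap. I would instead view $(v, w)$ as a variable point on the level curve $\binom{v}{r} + \binom{w}{r-1} = \binom{u}{r}$ and show that on the admissible arc $r - 2 \le w \le v$ the functional $\binom{v}{r-1} + \binom{w}{r-2}$ attains its minimum at the Pascal endpoint $v = w = u - 1$, where by Pascal's identity its value is exactly $\binom{u}{r-1}$. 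Differentiating along the level curve, this monotonicity reduces to the pointwise log-concavity type inequality $F_{r-1}'(v) F_{r-1}'(w) \ge F_r'(v) F_{r-2}'(w)$ for the derivatives $F_k'(x) := \tfrac{d}{dx}\binom{x}{k}$, which should be verifiable by a direct computation together with the monotonicity of the ratios $F_{r-1}'/F_r'$ and $F_{r-2}'/F_{r-1}'$ on $[r-1, \infty)$.
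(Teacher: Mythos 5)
Your algebraic preprocessing is correct: the identity $(x-r+1)\binom{x}{r-1}=r\binom{x}{r}$, the deduction $v\le u\le v+1$, the rewriting of the hypothesis as $(u-v)\binom{v}{r-1}+(u-r+1)\Delta=r\binom{w}{r-1}$, and the disposal of the case $r(w-r+2)\le(r-1)(u-r+1)$ all check out (modulo the harmless degenerate case $w=r-2$, where multiplying the target by $w-r+2=0$ loses information but the conclusion is trivial anyway). Note that the paper does not prove this lemma at all — it cites Lov\'asz's exercise book and records the hint $\binom{u+v+1}{m}=\sum_k\binom{u+k}{k}\binom{v-k}{m-k}$ — so a self-contained argument would be genuinely different and welcome. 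However, your treatment of the main case has a real gap.

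The gap is the claimed monotonicity of $G(v,w)=\binom{v}{r-1}+\binom{w}{r-2}$ along the level curve over the whole arc $r-2\le w\le v$: the pointwise inequality $F_{r-1}'(v)F_{r-1}'(w)\ge F_r'(v)F_{r-2}'(w)$ is \emph{false} on part of that arc, so $G$ is not monotone and does not in general attain its minimum only by flowing from the Pascal endpoint. Concretely, take $r=3$ and the level $\binom{u}{3}=4$ (so $u=4$); at the admissible arc point $v=3.9$, $w\approx 1.543$ one computes $F_2'(v)F_2'(w)\approx 3.4\times 1.04\approx 3.55$ while $F_3'(v)F_1'(w)\approx 4.04$, so $dG/dv<0$ there; indeed $G(3.9,1.543)\approx 7.20$ but $G(4,1)=7$, so $G$ increases from the Pascal value $6$ and then \emph{decreases} toward the endpoint $(u,r-2)$. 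Your argument could only hope to work if the derivative inequality were verified on the sub-arc actually traversed, i.e.\ on the portion where $rw+1>(r-1)u$ (heuristically this reduces to $(r-1)w\ge(r-2)v$, which does follow from $v\ge r-1$ and the case condition when one replaces $F_k'$ by its leading-order behaviour), but you neither restrict to that portion nor prove the exact inequality there; and the mechanism you propose for proving it — monotonicity of the ratios $F_{r-1}'/F_r'$ and $F_{r-2}'/F_{r-1}'$ — points the wrong way, since both ratios are decreasing and you need a \emph{lower} bound at the larger argument $v$ in terms of the value at the smaller argument $w$. As it stands, the entire content of Lov\'asz's lemma in the hard regime is deferred to an inequality that is unproved and, in the generality stated, false.
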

This appears as an exercise, with proof, in \cite{L}. It is actually stated in a slightly stronger way, where $r-1$ is replaced throughout the conclusion by any integer $k$ such that $1 \le k \le r-1$. But of course, the two versions are equivalent. 
\begin{proof} See \cite{L}. The hint provided by Lov\'asz is to use the following identity:
\[
\binom{u+v+1}{m} = \sum_{k=0}^m \binom{u+k}{k}\binom{v-k}{m-k}. \hfill \qedhere
\]
%Maybe this other formula given in Lovasz' book might also be used?
%$$
%\binom{u+v}{m} = \sum_{k=0}^m \binom{u}{k}\binom{v}{m-k}.
%$$
\end{proof}

\smallskip

Here is a straightforward consequence that we shall need.
\begin{proposition}\label{ineq} Let $r \ge 1$ be an integer, and let $u \ge v \ge w$ be real numbers such that $v \ge r$ and $w \ge r-1$. Assume
$ \displaystyle
\binom{u}{r} = \binom{v}{r} +\binom{w}{r-1}. 
$
Then
$\displaystyle
\binom{u+1}{r+1} \ge \binom{v+1}{r+1} +\binom{w+1}{r}. 
$
\end{proposition}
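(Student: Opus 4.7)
The strategy is to use Pascal's identity (valid for real arguments) to translate the target inequality into an equivalent one at a lower level, which then falls to Lemma~\ref{lovasz} applied in a contrapositive fashion. The first step is to record the real-variable Pascal rule: for every $x \in \R$ and integer $k \ge 0$,
$$
\binom{x+1}{k+1} \ = \ \binom{x}{k+1} + \binom{x}{k},
$$
a routine polynomial identity verified by factoring $\binom{x}{k}$ out of the two summands.

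\textbf{Reduction.} Applying this identity to each of the three binomial coefficients in the conclusion, one obtains the telescoping
$$
\binom{u+1}{r+1} - \binom{v+1}{r+1} - \binom{w+1}{r} \ = \ \Bigl(\binom{u}{r+1} - \binom{v}{r+1} - \binom{w}{r}\Bigr) + \Bigl(\binom{u}{r} - \binom{v}{r} - \binom{w}{r-1}\Bigr).
$$
The second parenthesis vanishes by hypothesis, so the proposition reduces to proving the single inequality
$$
\binom{u}{r+1} \ \ge \ \binom{v}{r+1} + \binom{w}{r}.
$$

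\textbf{Main step.} I would establish the reduced inequality by contradiction. Suppose $\binom{u}{r+1} < \binom{v}{r+1} + \binom{w}{r}$. Since $v \ge r$ and $u \ge v$, Lemma~\ref{bijection} with $i = r+1$ yields a unique real $u^\ast > u$ with $\binom{u^\ast}{r+1} = \binom{v}{r+1} + \binom{w}{r}$, and the same monotonicity forces $u^\ast \ge v$. I can then invoke Lemma~\ref{lovasz} on the triple $u^\ast \ge v \ge w$ at index $r+1$: the parameter hypotheses $v \ge (r+1)-1 = r$ and $w \ge (r+1)-2 = r-1$ are exactly those of the Proposition, and $r+1 \ge 2$ amounts to $r \ge 1$. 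The conclusion of Lovász reads
$$
\binom{u^\ast}{r} \ \le \ \binom{v}{r} + \binom{w}{r-1} \ = \ \binom{u}{r}.
$$
On the other hand, $u^\ast > u \ge r \ge r-1$, so Lemma~\ref{bijection} with $i = r$ forces $\binom{u^\ast}{r} > \binom{u}{r}$, a contradiction.

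\textbf{Main obstacle.} The only non-mechanical ingredient is seeing the Pascal telescoping: once one recognizes that the single increment $r \mapsto r+1$ applied to all three terms splits the quantity to control into the desired level-$(r+1)$ expression plus the level-$r$ hypothesis (which vanishes), the rest is a direct contrapositive of Lemma~\ref{lovasz}. A minor point of care is ensuring that all arguments lie in the monotonicity range $[i-1,\infty[$ of Lemma~\ref{bijection} each time it is invoked, but this is immediate from the assumptions $u \ge v \ge r$ and $w \ge r-1$.
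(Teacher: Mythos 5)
Your proof is correct and follows essentially the same route as the paper's: both establish the key intermediate inequality $\binom{u}{r+1} \ge \binom{v}{r+1} + \binom{w}{r}$ by the identical contradiction argument (introducing $z=u^\ast>u$ via Lemma~\ref{bijection} and contradicting Lemma~\ref{lovasz}), and both combine it with the hypothesis via the real-variable Pascal identity; you merely perform the Pascal reduction first rather than last. All side conditions for invoking the two lemmas are checked correctly.
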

\begin{proof} We first claim that the following relation holds:
\begin{equation}\label{binom u r+1}
\binom{u}{r+1} \ge \binom{v}{r+1} +\binom{w}{r}. 
\end{equation}
For otherwise, assume on the contrary that the left-hand side were strictly smaller than the right-hand side. Since the function $\displaystyle x \mapsto \binom{x}{r+1}$ is a strictly increasing bijection from $[r,\infty[$ to $[0,\infty[$, there would exist $z > u$ such that
$$
\binom{u}{r+1} < \binom{z}{r+1} = \binom{v}{r+1} +\binom{w}{r}. 
$$
Lemma~\ref{lovasz} would then imply
$$
\binom{z}{r} \le \binom{v}{r} +\binom{w}{r-1},
$$
which is absurd since by hypothesis, the right-hand side equals $\displaystyle \binom{u}{r}$ and $z > u$.
Now, adding $\displaystyle \binom{u}{r}$ to \eqref{binom u r+1}, the hypothesis implies
$$
\binom{u}{r+1}+\binom{u}{r} \ge \binom{v}{r+1} +\binom{w}{r} + \binom{v}{r} +\binom{w}{r-1}
$$
which in turn, by the basic Pascal triangle identity, yields the claimed inequality.
\end{proof}

\subsection{An upper bound on $a^{\vs{i}}$}
We shall also need the following upper bound on $a^{\vs{i}}$. 
\begin{theorem}\label{binom(x,i)} Let $a \ge 0$, $i \ge 1$ be integers, and let $x \ge i-1$ be the unique real number such that
$ \displaystyle
a = \binom{x}{i}.
$
Then
$ \displaystyle
a^{\vs{i}} \le \binom{x+1}{i+1}.
$
\end{theorem}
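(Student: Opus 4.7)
The plan is to proceed by induction on $i$. For the base case $i = 1$, the $1$-st binomial representation reads $a = \binom{a_1}{1} = a_1$, so $x = a_1$ and $a^{\vs{1}} = \binom{a_1+1}{2} = \binom{x+1}{2}$, with equality.

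For the inductive step at level $i \ge 2$, I would peel off the leading term of the $i$-th binomial representation, writing $a = \binom{a_i}{i} + a'$ with $a' = \sum_{j=1}^{i-1}\binom{a_j}{j}$ the $(i-1)$-st binomial representation of $a'$, and letting $y \ge i-2$ be the unique real with $\binom{y}{i-1} = a'$. If $a' = 0$, then $a = \binom{a_i}{i}$, so by uniqueness in Lemma~\ref{bijection} one has $x = a_i$, and the conclusion reduces to the trivial equality $a^{\vs{i}} = \binom{a_i+1}{i+1} = \binom{x+1}{i+1}$. Otherwise $a' \ge 1$, and the plan is to invoke Proposition~\ref{ineq} with $r = i$, $u = x$, $v = a_i$, $w = y$.

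The hypotheses $x \ge a_i$ and $y \ge i-1$ fall out of $\binom{x}{i} \ge \binom{a_i}{i}$ and $\binom{y}{i-1} \ge 1 = \binom{i-1}{i-1}$ via the monotonicity in Lemma~\ref{bijection}. The one slightly delicate point, and the main verification on my side, is the strict inequality $a_i > y$ (which in particular forces $a_i \ge i$): it comes from the greedy maximality of $a_i$, i.e.\ $\binom{a_i+1}{i} > a$, which by Pascal's identity rewrites as $\binom{a_i}{i-1} > a' = \binom{y}{i-1}$, whence $a_i > y$ again by monotonicity. Proposition~\ref{ineq} then delivers $\binom{a_i+1}{i+1} + \binom{y+1}{i} \le \binom{x+1}{i+1}$. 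Combining this with the induction hypothesis applied to $a'$ at level $i-1$, namely $(a')^{\vs{i-1}} \le \binom{y+1}{i}$, yields
$$
a^{\vs{i}} \;=\; \binom{a_i+1}{i+1} + (a')^{\vs{i-1}} \;\le\; \binom{a_i+1}{i+1} + \binom{y+1}{i} \;\le\; \binom{x+1}{i+1}.
$$

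The substantive work is borne entirely by Proposition~\ref{ineq} (and through it Lov\'asz's lemma); the main thing I expect to need to verify carefully on my own is the greedy inequality $a_i > y$, which is what licenses the application of Proposition~\ref{ineq}. The edge case $0 \le a < i$ (not formally covered by the binomial representation proposition as stated) causes no trouble: when $a = 0$ both sides vanish, and when $1 \le a < i$ the greedy representation still satisfies $a_i \ge i$, so the same argument goes through unchanged.
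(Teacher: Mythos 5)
Your proof is correct and follows essentially the same route as the paper's: induction on $i$, peeling off the leading term $\binom{a_i}{i}$, introducing the real $y$ with $\binom{y}{i-1}=a'$, and combining the induction hypothesis with Proposition~\ref{ineq}. The only difference is that you explicitly verify the hypotheses $x \ge a_i > y$, $a_i \ge i$, $y \ge i-1$ of Proposition~\ref{ineq} via the greedy maximality of $a_i$ (a point the paper leaves implicit), which is a welcome extra bit of care rather than a divergence.
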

\begin{proof}
By induction on $i$. For $i=1$, we have $x=a$ and the statement directly follows from the definition. Assume now $i \ge 2$ and the statement true for $i-1$. Consider the $i$th binomial representation of $a$:
$$
a = \sum_{j=1}^i \binom{a_j}j = \binom{a_i}{i} +b,
$$
where
$$
b = \sum_{j=1}^{i-1} \binom{a_j}j.
$$
By definition of the operation $t \mapsto {t}^{\vs{i}}$, we have
$$
a^{\vs{i}} = \binom{a_i+1}{i+1} +b^{\vs{i-1}}.
$$
Let $y \ge i-2$ be the unique real number such that
$\displaystyle
b = \binom{y}{i-1}.
$
Then
\begin{equation}\label{step}
a  \ = \ \binom xi \ = \ \binom{a_i}{i} + \binom{y}{i-1}.
\end{equation}
By the induction hypothesis, we have
$\displaystyle
b^{\vs{i-1}} \le \binom{y+1}{i}.
$
It follows that
$$
a^{\vs{i}} \ \le \ \binom{a_i+1}{i+1} + \binom{y+1}{i}.
$$
But now, it follows from \eqref{step} and Proposition~\ref{ineq} that
$$
\binom{x+1}{i+1} \ \ge \ \binom{a_i+1}{i+1} + \binom{y+1}{i}.
$$
This concludes the proof of the theorem.
\end{proof}

\subsection{A condensed version of Macaulay's theorem}

We now express Macaulay's theorem in a condensed version which is well suited to our present purposes. It is inspired by a similarly condensed version of the Kruskal-Katona theorem, due to Lov\'asz, again given as an exercise in his book \cite{L}. See also the book \cite{Bo} of Bollob\'as, where it is nicely presented and where we first spotted it. 

\begin{theorem}\label{short macaulay} Let $R = \oplus_{i \ge 0} R_i$ be a standard graded algebra over the field $\Bbb{K}$, with Hilbert function $h_i = \dim_{\Bbb{K}} R_i$ for all $i \ge 0$. Let $r \ge 1$ be an integer. Let $x \ge r-1$ be the unique real number satisfying
$\displaystyle
h_r = {x \choose r}.
$
Then
$$
h_{r-1} \ \ge \ {x-1 \choose r-1} \ \ \mbox{ and } \ \ h_{r+1} \ \le \ {x+1 \choose r+1}.
$$
\end{theorem}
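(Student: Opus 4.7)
The plan is to derive both inequalities directly from the tools already assembled in this section, namely Macaulay's theorem (Theorem~\ref{thm macaulay}), the upper bound on $a^{\vs{i}}$ (Theorem~\ref{binom(x,i)}), and the strict monotonicity of $t \mapsto \binom{t}{i}$ on $[i-1,\infty[$ (Lemma~\ref{bijection} and \eqref{bijectio}). The second inequality is the easy direction, while the first one is its mirror image, obtained by reading the same chain backwards.

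For the upper bound on $h_{r+1}$, first apply Macaulay's theorem at level $r$ to get $h_{r+1} \le h_r^{\vs{r}}$. Since $h_r = \binom{x}{r}$ with $x \ge r-1$, Theorem~\ref{binom(x,i)} (applied with $a = h_r$ and $i = r$) yields $h_r^{\vs{r}} \le \binom{x+1}{r+1}$. Chaining these two inequalities gives the desired bound $h_{r+1} \le \binom{x+1}{r+1}$.

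For the lower bound on $h_{r-1}$, I would argue contrapositively through the same two results but applied one level below. Let $y \ge r-2$ be the unique real number such that $h_{r-1} = \binom{y}{r-1}$, as guaranteed by Lemma~\ref{bijection}. Macaulay's theorem at level $r-1$ says $h_r \le h_{r-1}^{\vs{r-1}}$, and Theorem~\ref{binom(x,i)} (with $a = h_{r-1}$ and $i = r-1$) gives $h_{r-1}^{\vs{r-1}} \le \binom{y+1}{r}$. Combining,
\[
\binom{x}{r} \;=\; h_r \;\le\; \binom{y+1}{r}.
\]
Since both $x$ and $y+1$ lie in $[r-1,\infty[$, the strict monotonicity \eqref{bijectio} forces $x \le y+1$, i.e.\ $y \ge x-1$. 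The inequality $x \ge r-1$ forces $x-1 \ge r-2$, so both $y$ and $x-1$ lie in $[r-2,\infty[$, and applying monotonicity once more (this time for the binomial coefficient of order $r-1$) gives $h_{r-1} = \binom{y}{r-1} \ge \binom{x-1}{r-1}$, as required.

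I do not anticipate any real obstacle here: the statement is essentially a repackaging of Macaulay's inequality in a continuous parameter, and all the heavy lifting has already been done in Proposition~\ref{ineq} and Theorem~\ref{binom(x,i)}. The only points to watch are edge cases $r=1$ (where the lower bound becomes the trivial $h_0 \ge \binom{x-1}{0}=1$ if $x\ge 1$, or is vacuous if $x<1$) and the domain constraints $y \ge r-2$, $x \ge r-1$ that are needed to legitimately invoke the monotonicity of the binomial coefficients.
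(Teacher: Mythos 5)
Your proof is correct and follows essentially the same route as the paper: the upper bound is Macaulay's theorem chained with Theorem~\ref{binom(x,i)}, and the lower bound applies the same pair of results one level down and then invokes monotonicity of $t \mapsto \binom{t}{r}$ (the paper phrases this last step as a proof by contradiction, but the content is identical). Your attention to the $r=1$ edge case and the domain constraints is a slight refinement over the paper's write-up, which glosses over them.
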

\begin{proof} Let $a=h_r$. By Macaulay's Theorem~\ref{thm macaulay} followed by Theorem~\ref{binom(x,i)}, we have 
$\displaystyle
h_{r+1} \ \le \ a^{\vs{r}} \ \le \ \binom{x+1}{r+1}.
$
Assume now, for a contradiction, that 
\begin{equation}\label{ineq 2}
h_{r-1} \ < \ \binom{x-1}{r-1}.
\end{equation}
Let then $y \ge r-2$ be the unique real number such that 
$\displaystyle
h_{r-1}  =  \binom{y}{r-1}.
$
Then $y < x-1$ by Lemma~\ref{bijection}. It would then follow from the statement just proved and Lemma~\ref{bijection} that
$$
h_{r} \ \le \ \binom{y+1}{r}  \ < \ \binom{x}{r},
$$
contrary to our hypothesis. Therefore \eqref{ineq 2} is absurd and we are done.
\end{proof}

\subsection{Averaging the Hilbert function}
 We conclude this section with a result on the average of initial values of the Hilbert function of a standard graded algebra, namely that for any $q \ge 1$, the average of the $h_i$'s for $0 \le i \le q-1$ is bounded below by the ratio $h_q/h_1$. Note the similarity of the formula below with that of Remark~\ref{W0 ge 0}. This will be used in Section~\ref{further} to verify one further case of Wilf's conjecture. 

\begin{theorem}\label{average} Let $R = \oplus_{i \ge 0} R_i$ be a standard graded algebra over the field $\Bbb{K}$, with Hilbert function $h_i = \dim_{\Bbb{K}} R_i$ for all $i \ge 0$. Let $q \ge 1$ be an integer. Then
$$
qh_q \ \le \  h_1\big(1+h_1+\dots+h_{q-1}\big).
$$
\end{theorem}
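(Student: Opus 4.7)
The plan is to iterate the lower-bound half of the condensed Macaulay theorem (Theorem~\ref{short macaulay}) to control each $h_i$ by a binomial coefficient attached to a real parameter read off from $h_q$, then sum these bounds via a hockey-stick identity and collapse using the absorption identity $(x-q+1)\binom{x}{q-1}=q\binom{x}{q}$. If $h_q=0$ the inequality is trivial, so assume $h_q\ge 1$. Since $R$ is generated in degree one, the multiplication map $R_1\otimes R_{i-1}\twoheadrightarrow R_i$ is surjective, hence $h_i>0$ for every $0\le i\le q$. Let $x\ge q-1$ be the unique real number with $h_q=\binom{x}{q}$.

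I claim that $h_{q-i}\ge\binom{x-i}{q-i}$ for every $0\le i\le q$, proved by induction on $i$. The base case $i=0$ is the definition of $x$. For the inductive step, write $h_{q-i}=\binom{x_{q-i}}{q-i}$ for its unique real parameter $x_{q-i}\ge q-i-1$; by the induction hypothesis and Lemma~\ref{bijection}, $x_{q-i}\ge x-i$. Applying Theorem~\ref{short macaulay} at $r=q-i$ and then monotonicity of $t\mapsto\binom{t}{q-i-1}$ (Lemma~\ref{bijection} again) yields
$$
h_{q-i-1} \ \ge\ \binom{x_{q-i}-1}{q-i-1} \ \ge\ \binom{x-i-1}{q-i-1},
$$
which is the statement at step $i+1$. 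Taking $i=q-1$ gives in particular $h_1\ge x-q+1\ge 0$.

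To finish, I would invoke the hockey-stick identity $\sum_{k=0}^{q-1}\binom{r+k}{k}=\binom{r+q}{q-1}$, valid as a polynomial identity in $r$, at the real value $r=x-q$, obtaining
$$
1+h_1+h_2+\cdots+h_{q-1} \ \ge\ \sum_{k=0}^{q-1}\binom{x-q+k}{k} \ =\ \binom{x}{q-1}.
$$
Multiplying by $h_1\ge x-q+1$ (both factors nonnegative) and using $(x-q+1)\binom{x}{q-1}=q\binom{x}{q}$ produces
$$
h_1\bigl(1+h_1+\cdots+h_{q-1}\bigr) \ \ge\ (x-q+1)\binom{x}{q-1} \ =\ q\binom{x}{q} \ =\ qh_q.
$$

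The main subtlety I anticipate is bookkeeping inside the induction: one must check that each application of Theorem~\ref{short macaulay} threads through the real parametrization monotonically, so that the improvements in $x_{q-i}$ cascade without loss down to $h_1$. Once that is handled, the remainder reduces to a standard hockey-stick summation and a single binomial absorption identity, both of which are routine.
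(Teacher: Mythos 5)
Your proposal is correct and follows essentially the same route as the paper's proof: the same iterated application of the condensed Macaulay theorem to get $h_{q-i}\ge\binom{x-i}{q-i}$, the same hockey-stick summation to $\binom{x}{q-1}$, and the same absorption identity together with $h_1\ge x-q+1$. The only differences are cosmetic (you separate out the trivial case $h_q=0$ and spell out the monotonicity bookkeeping in the induction, which the paper leaves implicit).
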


\begin{proof} Let $x \ge q-1$ be the unique real number such that
$\displaystyle
h_q  =  \binom xq.
$
By repeatedly applying Theorem~\ref{short macaulay} together with Lemma~\ref{bijection}, we get
\begin{equation}\label{h(q-i)}
h_{q-i} \ \ge{} \ \binom{x-i}{q-i}
\end{equation}
for all $0 \le i \le q$. Summing over all $i$ in this range, this implies
$$
\sum_{i=1}^q h_{q-i} \ \ge \ \sum_{i=1}^q \binom{x-i}{q-i}.
$$
Now the sum on the right-hand side is equal to $\displaystyle \binom{x}{q-1}$. Therefore, we have
$$
\sum_{i=1}^q h_{q-i} \ \ge \ \binom{x}{q-1}.
$$
By the identity
$$
\binom{x}{q-1} \ =  \ {} \frac{q}{x-q+1}\binom{x}{q},
$$
it follows that
$$
(x-q+1)\sum_{i=1}^q h_{q-i} \ \ge \ q\binom{x}{q} = q h_q.
$$
And finally, it follows from \eqref{h(q-i)} at $i=q-1$ that $h_1 \ge x-q+1$, yielding the announced inequality.
\end{proof}

\section{Wilf's conjecture for $q=3$} \label{wilf}
We now settle Wilf's conjecture for numerical semigroups satisfying $q = 3$, i.e. $2m < c \le 3m$. The profile of any such semigroup is of the form $(p_1,p_2)$ with $p_1,p_2 \in \N$  and $p_1 \ge 1$. Our first step consists in reducing the verification of the conjecture to the case $p_2=0$. Macaulay's theorem, or its condensed version, will then be needed in the more difficult remaining step, that of settling the case of profile $(p_1,0)$.

\begin{notation} For a subset $A \subseteq \Z$ and an integer $i \ge 1$, we shall denote by $iA$ the $i$th iterated sumset
$$
iA \ = \  \underbrace{A + \dots + A}_i.
$$
\end{notation}
Thus $2P_2=P_2+P_2$ for instance, as involved below. 

\subsection{Reduction to profile $(p_1,0)$}

The announced reduction is relatively straightforward, except that the constant $\rho=\rho(S)$ plays a somewhat subtle role and must be treated with sufficient care. 

%
%\begin{proposition} Assume $q=3$. Let $S$ with profile $(p_1,p_2) \in \N^2$. Let $S' \subseteq S$ be a numerical semigroup with same $m,c$ as $S$ and with profile $(p_1,0)$, e.g.
%$$
%S' \ = \ \vs{P_1}_c \ = \ \vs{P_1} \cup [c, \infty[.
%$$
%If \ $W_0(S') \ge \rho(S)$ \ then \ $W_0(S) \ge 0$.
%\end{proposition}
%
%Alternatively:

\begin{proposition}\label{reduction} Let $S$ be a numerical semigroup with profile $(p_1,p_2)$. Let $S'=\vs{P_1}_c=\vs{P_1} \cup [c, \infty[$\ , so that $S' \subseteq S$ has profile $(p_1,0)$ and same multiplicity $m$ and conductor $c$ as $S$. Then
$$
W_0(S) \ \ge \ W_0(S')-\rho.
$$
\end{proposition}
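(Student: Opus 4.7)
The plan is to compute $W_0(S) - W_0(S')$ directly, reduce the inequality to a bound on the excess decomposables $|E| := d_3(S) - d_3(S')$ in $I_3$, and obtain that bound using Propositions~\ref{S_i+S_j} and~\ref{intersections}.

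First I would establish the structural relation $S \setminus S' = P_2$. Indeed, any $x \in \vs{P_1 \cup P_2} \cap [0, c-1]$ involving at least one element of $P_2$ must equal a single $P_2$-element, since a sum of two or more primitives already exceeds $m + (2m-\rho) = c$; and $P_2 \cap \vs{P_1} = \emptyset$ by primitivity. Consequently $L^*(S) = L^*(S') \sqcup P_2$, giving $|L(S)| = |L(S')| + p_2 = 1 + p_1 + \delta + p_2$ and $|P \cap L(S)| = p_1 + p_2$, while $\delta := |2P_1 \cap I_2|$ is common to both semigroups. Substituting into $W_0(T) = |P(T) \cap L(T)| \cdot |L(T)| - 3\,d_3(T) + \rho$, the $\rho$-terms cancel and expansion gives
\[
W_0(S) - W_0(S') \;=\; p_2\bigl(2p_1 + 1 + p_2 + \delta\bigr) - 3|E|.
\]
The statement is thus equivalent to $3|E| \le p_2(2p_1 + 1 + p_2 + \delta) + \rho$.

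Next I would bound $|E|$. Since $D_3(S') \subseteq D_3(S)$, every $x \in E$ admits a decomposition $x = a + b$ with $a \in P_2$; and the constraints $x < c + m$ and $a \ge 2m-\rho$ force $b \in L^*(S) \cap [m, 2m-1] = P_1 \sqcup (S_2 \cap [2m-\rho, 2m-1])$. This yields $E \subseteq E_a \cup E_b$ where $E_a \subseteq (P_1 + P_2) \cap I_3 \subseteq (S_1 + S_2) \cap S_3$ and $E_b \subseteq (P_2 + S_2) \cap I_3 \subseteq (S_2 + S_2) \cap S_3$. Proposition~\ref{intersections} applied to $(i,j)=(2,2)$ immediately gives $|E_b| \le \rho$, and the trivial $|E_a| \le p_1 p_2$ yields $|E| \le p_1 p_2 + \rho$.

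To conclude, it remains to verify $3(p_1 p_2 + \rho) \le p_2(2p_1 + 1 + p_2 + \delta) + \rho$, i.e., $p_1 p_2 + 2\rho \le p_2(1 + p_2 + \delta)$. The key auxiliary observation is $\delta \ge p_1$: the $p_1$ distinct translates $\{m + p : p \in P_1\}$ all lie in $2P_1 \cap I_2 = D_2(S)$. With $\delta \ge p_1$ the inequality reduces to $2\rho \le p_2(1 + p_2)$, which holds whenever $p_2 \ge 2$ or $\rho \le 1$. The hard part, where the ``subtle role of $\rho$'' mentioned in the paper enters, is the remaining range $p_2 = 1$ with $\rho \ge 2$: here I would sharpen the bound on $E_a$ by combining Proposition~\ref{S_i+S_j} ($S_1 + S_2 \subseteq S_3 \cup S_4$) with the second part of Proposition~\ref{intersections} ($|(S_1+S_2) \cap S_4| \le m-\rho-1$) to force some of the sums $p + q$ into $I_4$ rather than $I_3$, together with the standing hypothesis $c(S') = c$ (equivalently $c - 1 \notin 2P_1$), which rules out the extremal configurations that would otherwise saturate $|E_a|$.
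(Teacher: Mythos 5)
Your setup is correct and runs parallel to the paper's own argument: the identity $W_0(S)-W_0(S')=p_2(2p_1+1+p_2+\delta)-3|E|$ checks out, the splitting of the excess decomposables into a $(P_1+P_2)\cap I_3$ part and a $2P_2\cap I_3$ part is exactly the decomposition the paper uses, and $\delta=d_2\ge p_1$ (via $m+P_1\subseteq D_2$) is the right auxiliary fact. The gap is in your bound on $E_b$. You retain only $|E_b|\le\rho$ from Proposition~\ref{intersections}, which reduces the target to $2\rho\le p_2(1+p_2)$, and you then assert that this holds whenever $p_2\ge 2$. That assertion is false: $\rho$ can be as large as $m-1$ and is in no way controlled by $p_2$. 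Concretely, $S=\vs{20,27,29}_{45}$ has $m=20$, $c=45$, $q=3$, $\rho=15$, profile $(1,2)$, so $2\rho=30>6=p_2(1+p_2)$; here $|E_b|=|2P_2\cap I_3|=3$, far below your bound $\rho=15$, and your chain of inequalities gives $3|E|\le 51$ where the target is $27$. The missing ingredient is the second, elementary bound $|E_b|\le|2P_2|\le p_2(p_2+1)/2$: the paper uses $|E_b|\le\min\bigl(\rho,\,p_2(p_2+1)/2\bigr)$ and then the small case analysis showing $-3\min\bigl(\rho,p_2(p_2+1)/2\bigr)+\rho\ge -p_2(p_2+1)$, which is precisely what makes the final inequality close for all $\rho$.

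A secondary remark: once the $\min$ is in place, the case you single out as hard ($p_2=1$, $\rho\ge2$) becomes trivial, since then $|E_b|\le 1$; conversely the case you dismiss as easy ($p_2\ge2$ with large $\rho$) is where your argument actually breaks. The sketched sharpening of $E_a$ via $|(S_1+S_2)\cap S_4|\le m-\rho-1$ is not needed and would not rescue matters anyway, since the problem sits in $E_b$, not $E_a$.
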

\begin{proof} 
Consider the decomposable elements of $S$ in $I_q=I_3$. We have
$$
D_3(S) \ = \ D_3(S') \cup \big((P_1+P_2) \cap I_3\big) \cup \big(2P_2 \cap I_3\big).
$$
Thus, if follows from Proposition~\ref{intersections} involving $\rho$, and the obvious sumset estimates $|2A| \le |A|(|A|+1)/2$ and $|A+B| \le |A||B|$ for finite subsets $A,B \subset \Z$, that
\begin{eqnarray*}
d_3(S) & \le & d_3(S') + |(P_1+P_2)\cap I_3| + |2P_2 \cap I_3| \\
& \le & d_3(S') +p_1p_2 + \min(\rho, p_2(p_2+1)/2).
\end{eqnarray*}
Plugging this inequality in the expression of $W_0(S)$, we get
\begin{eqnarray*}
W_0(S) & = & |P \cap L||L| - 3d_3 + \rho \\
& \ge & |P \cap L||L| - 3d_3(S') -3p_1p_2 -3 \min(\rho, p_2(p_2+1)/2) + \rho.
\end{eqnarray*}

\smallskip
\noindent
\textbf{Claim}. For the sum of the last two terms, the following bound holds:
\begin{equation}\label{inequa}
-3 \min(\rho, p_2(p_2+1)/2) + \rho \ \ge \ - p_2(p_2+1).
\end{equation}
Indeed, if $\rho \le p_2(p_2+1)/2$, then $\min(\rho, p_2(p_2+1)/2) = \rho$, whence
$$
-3 \min(\rho, p_2(p_2+1)/2) + \rho \ = \ -2\rho \ \ge \ - p_2(p_2+1).
$$
Similarly, if $\rho > p_2(p_2+1)/2$, then $\min(\rho, p_2(p_2+1)/2) = p_2(p_2+1)/2$, whence
$$
-3 \min(\rho, p_2(p_2+1)/2) + \rho \ = \ -3p_2(p_2+1)/2 + \rho \ > \ -2p_2(p_2+1)/2.
$$
This establishes the claim.

\bigskip

Plugging \eqref{inequa} into the above estimate of $W_0(S)$, we get
\begin{equation}\label{inequa2}
W_0(S) \ \ge \ |P \cap L||L| - 3d_3(S') -3p_1p_2 - p_2(p_2+1).
\end{equation}
Now, we have $|P \cap L|=p_1+p_2$ and $|L|=1+p_1+(p_2+d_2)$. It follows that
\begin{eqnarray*}
|P \cap L||L| - 3d_3(S') & = & (p_1+p_2)(1+p_1+p_2+d_2) - 3d_3(S') \\
& = & p_2^2 + p_2(1+2p_1+d_2) + p_1(1+p_1+d_2) - 3d_3(S') \\
& = & p_2^2 + p_2(1+2p_1+d_2) + W_0(S')-\rho,
\end{eqnarray*}
by definition of $W_0(S')$ and since $D_2(S) = D_2(S')$. Going back to \eqref{inequa2}, the above yields
\begin{eqnarray*}
W_0(S) & \ge & |P \cap L||L| - 3d_3(S') -3p_1p_2 - p_2(p_2+1) \\
 & = & p_2^2 + p_2(1+2p_1+d_2) + W_0(S')-\rho  -3p_1p_2 - p_2(p_2+1) \\
 & = &  p_2(d_2-p_1) + W_0(S')-\rho.
\end{eqnarray*}
Finally, since $m+P_1 \subseteq D_2$, we have
$
d_2  \ge  p_1.
$
It follows that
$
W_0(S) \ge W_0(S')-\rho,
$
as claimed.
\end{proof}

\medskip

Consequently, in order to settle Wilf's conjecture for the case $q=3$, it remains to prove $W_0(S') \ge \rho$ for any numerical semigroup $S'$ with profile $(k,0)$. This is done in Theorem~\ref{profile (k,0)} below. We start with a counting lemma whose proof relies on our condensed version of Macaulay's theorem.

\subsection{Counting some Ap\'ery elements}

We shall need the following bound relating the numbers of Ap\'ery elements in $2X_1 \cap X_2$ and in $3X_1 \cap X_3$ in a numerical semigroup $S$ of the desired profile.
\begin{lemma}\label{X2X3} Assume the profile of $S$ is $(k,0)$. Let $x \in \R$ be such that $x \ge 1$ and 
$$|2X_1 \cap X_2| \ = \ {x \choose 2}.$$
Then
$$|3X_1 \cap X_3| \ \le \ {x+1 \choose 3}.$$
\end{lemma}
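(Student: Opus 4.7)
The plan is to build a standard graded $\K$-algebra $R = \bigoplus_{i\ge 0} R_i$ whose Hilbert function satisfies $h_2 = |2X_1\cap X_2|$ and $h_3 = |3X_1\cap X_3|$, and then to apply the condensed form of Macaulay's theorem (Theorem~\ref{short macaulay}) at $r=2$.

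Concretely, I set $R_0=\K$, take $R_i$ to be the $\K$-vector space with basis $\{e_u : u \in iX_1\cap X_i\}$ for $1\le i \le q=3$, and $R_i=0$ for $i\ge 4$ (consistent with $X_i=\emptyset$ beyond level $q$). On basis elements, I define
$$
e_u \cdot e_v \ =\ \begin{cases} e_{u+v} & \text{if } u+v \in (i+j)X_1 \cap X_{i+j}, \\ 0 & \text{otherwise.} \end{cases}
$$
Since $u+v\in(i+j)X_1$ is automatic whenever $u\in iX_1$ and $v\in jX_1$, the only substantive condition is $u+v \in X_{i+j}=X\cap S_{i+j}$. Commutativity is built in, and generation of $R$ by $R_1$ follows from associativity: any basis element $u=a_1+\cdots+a_i$ of $R_i$ is the iterated product $e_{a_1}\cdots e_{a_i}$, provided every partial sum lies in the correct $X_j$, which is ensured by the sub-sum argument below.

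The delicate point is associativity. The only nontrivial case is a triple $(e_a,e_b,e_d)$ with $a,b,d\in X_1$, because any triple in $R_i\times R_j\times R_k$ with $i+j+k\ge 4$ maps to $R_{\ge 4}=0$ on both sides. If $a+b+d\notin X_3$, both iterated products vanish. If $a+b+d\in X_3$, I claim $a+b\in X_2$ and $b+d\in X_2$, which forces $(e_ae_b)e_d = e_a(e_be_d) = e_{a+b+d}$. The Apéry part $a+b\in X$ holds because $a+b\notin X$ would give $a+b-m\in S$, whence $a+b+d-m\in S$, contradicting $a+b+d\in X$. The level part uses that $d\ge m$ and $a+b+d\in S_3=[c,c+m[$ together force $a+b < c = 3m-\rho$, while $a+b\ge 2m \ge 2m-\rho$, placing $a+b\in[2m-\rho,\,3m-\rho[\, = S_2$. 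Hence $a+b\in X\cap S_2=X_2$, and by symmetry $b+d\in X_2$.

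With $R$ confirmed as a standard graded $\K$-algebra, the Hilbert function satisfies $h_2=\dim_\K R_2=|2X_1\cap X_2|=\binom{x}{2}$ with $x\ge 1$, and $h_3=\dim_\K R_3=|3X_1\cap X_3|$. Applying Theorem~\ref{short macaulay} at $r=2$ then yields $h_3\le \binom{x+1}{3}$, which is exactly the inequality claimed. The principal obstacle is the associativity check for the multiplication on $R$, and more precisely its level component, which rests on the inequality $a+b<c$ extracted from $a+b+d\in S_3$ together with $d\ge m$; once this is in hand, Macaulay's theorem does the rest.
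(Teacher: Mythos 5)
Your proposal is correct, and it reaches the same endpoint as the paper --- a standard graded algebra with $h_2=|2X_1\cap X_2|$ and $h_3=|3X_1\cap X_3|$, fed into the condensed Macaulay Theorem~\ref{short macaulay} at $r=2$ --- but it builds the algebra differently. The paper starts from the genuine monomial algebra $R=\K[t^{a_1}u,\dots,t^{a_k}u]$ with $A=P_1$ (so associativity and generation in degree one are free) and passes to the quotient $R'=R/J$ by the monomial ideal killing $2A\setminus X_2$ in degree $2$ and $3A\setminus X_3$ in degree $3$; the work then lies in showing that $\dim R'_3$ is not smaller than $|3X_1\cap X_3|$, i.e.\ that the degree-$2$ relations do not kill any wanted degree-$3$ monomial, which is the inclusion $A+(2A\setminus X_2)\subseteq 3A\setminus X_3$. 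You instead define the algebra by structure constants on a basis indexed by $iX_1\cap X_i$, so the dimensions are correct by fiat, and the work migrates into the associativity (and degree-one generation) check. The two verifications are in fact the same combinatorial statement in contrapositive form: your claim that $a+b+d\in X_3$ forces $a+b\in X_2$ is exactly the paper's inclusion restricted to $X_1$, and your two-step argument (the Ap\'ery part via $a+b-m\in S\Rightarrow a+b+d-m\in S$, the level part via $d\ge m$ and $a+b+d<c+m$ forcing $a+b<c$) is a correct and complete proof of it. One cosmetic slip: $[2m-\rho,3m-\rho[$ is $I_2$, not $S_2$; you should conclude $a+b\in S\cap I_2=S_2$ using that $a+b\in S$. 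What the paper's route buys is that nothing ring-theoretic needs to be verified by hand; what yours buys is that the Hilbert function is visible immediately and the algebra is as small as possible. Either way the lemma follows.
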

\begin{proof} It suffices to construct a standard graded algebra $R'$ with the property that
$$
\dim R'_i \ = \ |iX_1 \cap X_i|
$$
for $i=1,2$ and then apply Macaulay's theorem or its condensed version. We now proceed to construct such an algebra $R'$.

By hypothesis on the profile of $S$, we have $P\cap L = P_1 = \{m=a_1 <a_2 < \dots < a_k\}=\{m\} \mathbin{\dot{\cup}} X_1$. 
Consider the standard graded algebra
$$
R \ = \ \K[t^{a_1}u, \dots, t^{a_k}u],
$$
where the variables $t$ and $u$ have degree 0 and 1, respectively. Let $A=P_1$. Then, for all $i \ge 0$, we have
$$
\dim R_i \ = \ |i A|.
$$
Now of course, we have
\begin{eqnarray*}
2A & = & (2A \cap X_2) \mathbin{\dot{\cup}} (2A \setminus X_2), \\
3A & = & (3A \cap X_3) \mathbin{\dot{\cup}} (3A \setminus X_3).
\end{eqnarray*}
Moreover, since
$$
2A \ = \ (m+A) \cup 2X_1 \ \ \mbox{ and } \ \ (m+A) \cap X_2 \ = \ \emptyset, 
$$
we have $2A \cap X_2 = 2X_1 \cap X_2$. Similar properties hold for $3A \cap X_3$. Thus, we obtain the following partitions:
\begin{eqnarray*}
2A & = & (2X_1 \cap X_2) \ \mathbin{\dot{\cup}} \ (2A \setminus X_2), \\
3A & = & (3X_1 \cap X_3) \ \mathbin{\dot{\cup}} \ (3A \setminus X_3).
\end{eqnarray*}

Consider the ideal $J \subseteq R$ spanned by all monomials of the form
$$
t^{b}u^2 \ \textrm{ and } \ t^cu^3, 
$$
where
$$
b \in 2A \setminus X_2 \ \ \mbox{ and } \ \ c \in 3A \setminus X_3.
$$
Let 
$$
R' \ = \ R/J.
$$
It is still a standard graded algebra. Regarding its Hilbert function, we claim:
\begin{eqnarray*}
\dim R'_2 & = & |2 X_1 \cap X_2|, \\ 
\dim R'_3 & = & |3 X_1 \cap X_3|.
\end{eqnarray*}
The first equality follows from the above partition $2A = (2X_1 \cap X_2) \mathbin{\dot{\cup}} (2A \setminus X_2)$. The second one follows from the analogous partition $3A = (3X_1 \cap X_3) \mathbin{\dot{\cup}} (3A \setminus X_2)$ and the following inclusion, which shows that killing the monomials $t^bu^2$ of  $J$ in the quotient $R/J$ does not kill any monomial of the form $t^du^3$ for $d \in X_3$: 
\begin{equation}\label{obs}
A+(2A \setminus X_2) \ \subseteq \ 3A \setminus X_3.
\end{equation}
Indeed, we have $2A \setminus X_2 \subseteq (m+S) \cup I_3$, i.e., any $z \in 2A \setminus X_2$ either is not an Ap\'ery element or belongs to $I_3$. Inclusion \eqref{obs} now follows from the inclusions
\begin{eqnarray*}
A + (m+S) & \subseteq & m+S, \\
A + I_3 & \subseteq & I_\infty,
\end{eqnarray*}
where $I_\infty = \bigcup_{j \ge 4} I_j=[c+m, \infty[$, and the fact that $X_3$ is disjoint from both $m+S$ and $I_\infty$.

\medskip

The lemma now follows by applying the condensed Macaulay Theorem~\ref{short macaulay} to the claimed respective dimensions of $R'_2, R'_3$.
\end{proof}

\subsection{The case of profile $(k,0)$}

\begin{theorem}\label{profile (k,0)} Let $S \subset \N$ be a numerical semigroup with $q=3$ and profile $(k,0)$ for some $k \ge 1$. Then $W_0(S) \ge \rho(S)$.
\end{theorem}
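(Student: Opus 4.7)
The plan is to unfold $W_0(S)\ge\rho$ into a numerical inequality in $k$, $\alpha_2$ and $\alpha_3$, and then bound $\alpha_3$ combinatorially. Using Proposition~\ref{formulas} together with the assumption of profile $(k,0)$---so $|P\cap L| = k$, $|L| = 1+k+d_2$, $d_2 = k+\alpha_2$ and $d_3 = d_2 + \alpha_3$---the inequality $W_0(S) = k(1+k+d_2) - 3d_3 + \rho \ge \rho$ rewrites as
\begin{equation*}
3\alpha_3 \;\le\; 2k(k-1) + (k-3)\alpha_2.
\end{equation*}
So everything reduces to proving this target inequality.

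First I would characterize $X_3\setminus P$ via a case analysis on a decomposition $y = a+b$ with $a,b\in S^*$ of each element. Since $y\in X_3$ one has $y-m\notin S$, which rules out $a=m$ or $b=m$; and since $a+b\le c+m-1<4m$, both summands lie in $S_1\cup S_2$. From $p_2=0$ we have $D_2 = (m+P_1)\mathbin{\dot\cup} X_2$, with $X_2\subseteq 2X_1$. In the case $a\in S_1\setminus\{m\}$, $b\in S_2$, the sub-case $b\in m+P_1$ contradicts $y\in X$, so $b\in X_2$, giving $y \in X_1+X_2\subseteq 3X_1$. In the case $a,b\in S_2$, the same reasoning forces both into $X_2$, hence $y\in X_2+X_2$; but $\min(X_2+X_2)\ge 2\min X_2\ge 4m+4 > 4m-\rho-1 = \max I_3$, so this case is impossible. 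The remaining case gives $y\in 2X_1$. Hence
\begin{equation*}
X_3\setminus P \;\subseteq\; (2X_1\cup 3X_1)\cap X_3,
\end{equation*}
and setting $\gamma := |2X_1\cap X_3|$ and $\delta := |3X_1\cap X_3|$, I obtain $\alpha_3 \le \gamma + \delta$.

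Next I would bound $\gamma$ and $\delta$ separately. Writing $\alpha_2 = \binom{x}{2}$ for the unique real $x\ge 1$ given by Lemma~\ref{bijection}, Lemma~\ref{X2X3} yields $\delta \le \binom{x+1}{3}$, and the elementary sumset bound $\alpha_2 \le |2X_1|\le \binom{|X_1|+1}{2} = \binom{k}{2}$ forces $x\le k$. For $\gamma$, using $D_2 = (m+P_1)\mathbin{\dot\cup} X_2$ and $2X_1\subseteq S_2\cup S_3$, the multiset $2X_1$ partitions as
\begin{equation*}
2X_1 \;=\; X_2 \;\mathbin{\dot\cup}\; \bigl(2X_1\cap(m+P_1)\bigr)\;\mathbin{\dot\cup}\;\bigl(2X_1\cap S_3\bigr),
\end{equation*}
from which
\begin{equation*}
\gamma \;\le\; |2X_1\cap S_3| \;=\; |2X_1|-\alpha_2-|2X_1\cap(m+P_1)| \;\le\; \binom{k}{2}-\alpha_2.
\end{equation*}

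Combining these two bounds gives $\alpha_3\le \binom{k}{2}-\alpha_2+\binom{x+1}{3}$; substituting $\alpha_2=\binom{x}{2}$, multiplying by $3$ and rearranging, the target inequality reduces to $x(x-1)(x-k+1) \le k(k-1)$, which is immediate for $x\le k$: the left-hand side is $\le 0$ if $x\le k-1$, and equals $k(k-1)$ when $x=k$. The main subtlety lies in recognizing that the Macaulay-based bound $\delta\le\binom{x+1}{3}$ from Lemma~\ref{X2X3} and the elementary sumset bound $\gamma\le\binom{k}{2}-\alpha_2$ are exactly the complementary pieces needed so that together they telescope into the target, with equality only in the extremal case $x=k$.
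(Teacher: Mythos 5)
Your proof is correct and follows essentially the same route as the paper's: the same reduction of $W_0(S)-\rho \ge 0$ to an inequality in $k$, $\alpha_2=\binom{x}{2}$ and $\alpha_3$, the same splitting $\alpha_3 \le |2X_1\cap X_3|+|3X_1\cap X_3|$ controlled by the sumset bound $|2X_1|\le\binom{k}{2}$ together with Lemma~\ref{X2X3}, and the same terminal polynomial inequality --- which the paper writes as $\binom{k}{2}+(k-x-1)\binom{x}{2}\ge (k-x)\binom{x}{2}\ge 0$, a form that also dispatches the range $k-1<x<k$ that your ``immediate'' final step passes over in silence (there one bounds $x(x-1)(x-k+1)\le x(x-1)\le k(k-1)$). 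Your explicit case analysis establishing $X_3\setminus P\subseteq 2X_1\cup 3X_1$ is a welcome elaboration of a containment the paper merely asserts.
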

\begin{proof} By hypothesis, we have $P\cap L = P_1 = \{m\} \mathbin{\dot{\cup}} X_1$. Let us denote
$$X_1 \ = \ \{a_2 < \dots < a_k\}$$
with $m < a_2$. We may list the elements of $D_3$ in terms of the Ap\'ery ones as follows:
$$
D_3 \ = \ \{3m\} \mathbin{\dot{\cup}} \big(2m+X_1) \mathbin{\dot{\cup}} \big(m+X_2) \mathbin{\dot{\cup}} X_3',
$$
where $X_3'=X_3 \setminus P$. By Proposition~\ref{formulas}, and recalling our notation $\alpha_2=|X_2|$, $\alpha_3=|X_3'|$, we have
%$$
%D_3 = \{3m\} \mathbin{\dot{\cup}} \big(2m+\{a_2,\dots,a_k\}\big) \mathbin{\dot{\cup}} \big(m+\{b_1,\dots,b_s\}\big) \mathbin{\dot{\cup}} \{c_1,\dots,c_t\}.
%$$
%$$
%d_3 \ = \ k + \alpha_2+\alpha_3.
%$$
%Moreover, formula \eqref{aperys in L} in Proposition~\ref{formulas} ensures that
\begin{eqnarray*}
d_3 & = & k + \alpha_2+\alpha_3, \\
|L| & = & 3+2(k-1)+\alpha_2 \\
& = & 2k+1+\alpha_2.
\end{eqnarray*}
Therefore
\begin{eqnarray*}
W_0(S)-\rho & = & k|L|-3d_3 \\
& = & k\big(2k+1+\alpha_2\big)-3(k+\alpha_2+\alpha_3) \\
& = & 2k(k-1)+k\alpha_2-3(\alpha_2+\alpha_3) \\
& = & 4\binom{k}{2}+k\alpha_2-3(\alpha_2+\alpha_3).
\end{eqnarray*}
We now proceed to bound $\alpha_2+\alpha_3 = |X_2|+|X_3'|$. Since $X_2 \subseteq 2X_1$ and $X_3' \subseteq 2X_1 \cup 3X_1$, we have
\begin{eqnarray*}
\alpha_2 & = & |X_2| \ = \ |2X_1 \cap X_2|, \\
\alpha_3 & = & |X_3'| \ = \ |2X_1 \cap X_3|+|3X_1 \cap X_3|. 
\end{eqnarray*}
It follows that
\begin{eqnarray*}
\alpha_2+\alpha_3 & = & |2X_1 \cap X_2|+|2X_1 \cap X_3|+|3X_1 \cap X_3| \\
& \le & |2X_1| + |3X_1 \cap X_3| \\
& \le & \binom{k}{2} + |3X_1 \cap X_3|.
\end{eqnarray*}
Plugging this into the latter estimate of $W_0(S)-\rho$, we get
\begin{equation}\label{estimate q=3}
W_0(S)-\rho \ \ge \ \binom{k}{2}+k|2X_1 \cap X_2|-3|3X_1 \cap X_3|.
\end{equation}
Let $x \ge 1$ be the unique real number such that 
$$
|2X_1 \cap X_2| \ = \ \binom{x}{2}.
$$
Note that $x \le k$, since
$$
|2X_1 \cap X_2| \ \le \ |2X_1| \ \le \ \binom{k}{2}.
$$
Further, it follows from Lemma~\ref{X2X3} that 
$$
|3X_1 \cap X_3| \ \le \ \binom{x+1}{3}.
$$
Plugging these inequalities into \eqref{estimate q=3}, we obtain
\begin{eqnarray*}
W_0(S) - \rho & \ge & \binom{k}{2}+k\binom{x}{2}-3\binom{x+1}{3} \\
& = & \binom{k}{2}+k\binom{x}{2}-3\frac{x+1}{3}\binom{x}{2} \\
& = & \binom{k}{2}+(k-x-1)\binom{x}{2}.
\end{eqnarray*}
Since $\displaystyle \binom{k}{2} \ge \binom{x}{2}$ and $k \ge x$ as observed above, we conclude
$$W_0(S)-\rho \ \ge \ (k-x)\binom{x}{2} \ \ge \ 0,$$ as desired.
\end{proof}

\begin{corollary}\label{case q=3} Wilf's conjecture holds for all numerical semigroups $S$ satisfying $q(S)=3$.
\end{corollary}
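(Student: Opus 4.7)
The plan is to chain together the two main results just proved, namely Proposition~\ref{reduction} and Theorem~\ref{profile (k,0)}, with the general criterion Proposition~\ref{prop W vs W0}. Let $S$ be a numerical semigroup with $q(S)=3$ and profile $(p_1,p_2)$. First I would introduce the auxiliary semigroup $S' = \vs{P_1}_c = \vs{P_1} \cup [c,\infty[$, which by construction shares the same multiplicity $m$, conductor $c$, and hence the same remainder $\rho = \rho(S) = \rho(S')$ as $S$, but has the simplified profile $(p_1, 0)$.

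Next I would invoke Proposition~\ref{reduction} to obtain the comparison
$$
W_0(S) \ \ge \ W_0(S') - \rho,
$$
which trades off the presence of level-2 primitives in $S$ for the simpler semigroup $S'$ at the cost of the additive term $-\rho$. Then, since $S'$ has profile $(k,0)$ with $k = p_1$ and still satisfies $q(S')=3$, Theorem~\ref{profile (k,0)} applies and gives
$$
W_0(S') \ \ge \ \rho.
$$
Combining the two inequalities yields $W_0(S) \ge 0$.

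Finally, Proposition~\ref{prop W vs W0} states that $W(S) \ge W_0(S)$, so $W(S) \ge 0$, which by the reformulation of the conjecture in terms of $W(S)$ is exactly Wilf's conjecture for $S$. There is no real obstacle here: all the nontrivial work (the sumset-with-$\rho$ bookkeeping on one side, and the Macaulay-based counting of Ap\'ery elements on the other) has already been absorbed into Proposition~\ref{reduction} and Theorem~\ref{profile (k,0)}. The only thing to verify carefully is that $\rho(S') = \rho(S)$, which is immediate since $m$ and $c$ are preserved by the construction of $S'$, so the $-\rho$ term from the reduction step exactly cancels the $+\rho$ lower bound coming from the profile-$(k,0)$ case.
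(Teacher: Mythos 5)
Your proposal is correct and is precisely the argument the paper gives: combine Proposition~\ref{reduction} ($W_0(S) \ge W_0(S')-\rho$) with Theorem~\ref{profile (k,0)} ($W_0(S') \ge \rho$, valid since $S'$ has the same $m$ and $c$, hence the same $q=3$ and $\rho$) to get $W_0(S)\ge 0$, and then conclude via Proposition~\ref{prop W vs W0}. Nothing further is needed.
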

\begin{proof}
Straightforward from the above result and the reduction to profile $(k,0)$ provided by Proposition~\ref{reduction}, which together imply $W_0(S) \ge 0$.
\end{proof}

As observed in the Introduction, the importance of this corollary stems from a recent result of Zhai \cite{Z} stating that, as $g$ goes to infinity, the proportion of numerical semigroups of genus $g$ satisfying $q=3$ tends to 1. As a matter of illustration, here is a table showing how $q$ is distributed for $18 \le g \le 25$. It clearly shows that, in this range for $g$, the two cases $q=3$ and $q=2$ together contain an overwhelming majority of numerical semigroups. This table was obtained with the GAP package {\tt numericalsgps} \cite{DG}.

\renewcommand{\tabcolsep}{1.2pt}
\begin{table}
\caption{Distribution of $q=q(S)$ by genus $g$, for $18 \le g \le 25$ and $q \le 20$.}
\vspace{-0.2cm}
\begin{center}
{\small
\begin{tabular}{|l||c|r|r|r|r|r|r|r|r|r|r|r|r|r|r|r|r|r|r|r|r|}
\hline
$g \backslash q$ & 1 & 2 & 3 & 4 & 5 & 6 & 7 & 8 & 9 & 10 & 11 & 12 & 13 & 14 & 15 & 16 & 17 & 18 & 19 & 20 \\  
\hline
\hline
18 &   1 &  4180 &  6935 &  1739 &  409 &  132 &  37 &  13 &  14 &  2 &  2 &  2 &  0 &  0 &  0 &  0 &  0 &  1  &   &    \\
\hline
19 &  1 &  6764 &  11828 &  2895 &  670 &  195 &  63 &  20 &  14 &  8 &  2 &  2 &  1 &  0 &  0 &  0 &  0 &  0 &  1  & \\
\hline
20 & 1 & 10945 & 20096 & 4805 & 1085 & 290 & 103 & 35 & 14 & 15 & 2 & 2 & 2 & 0 & 0 & 0 & 0 & 0 & 0 & 1 \\
\hline
21 &  1 &  17710 &  34069 &  7943 &  1750 &  453 &  172 &  46 &  19 &  15 &  9 &  2 &  2 &  2 &  0 &  0 &  0 &  0 &  0 &  0 \\
\hline
22 &  1 &  28656 &  57566 &  13108 &  2806 &  707 &  249 &  81 &  32 &  16 &  16 &  2 &  2 &  2 &  1 &  0 &  0 &  0 &  0 &  0 \\
\hline
23 &  1 &  46367 &  96949 &  21509 &  4453 &  1102 &  357 &  132 &  44 &  16 &  17 &  9 &  2 &  2 &  2 &  0 &  0 &  0 &  0 &  0  \\
\hline
24 &  1 &  75024 &  162911 &  35248 &  7052 &  1741 &  500 &  221 &  60 &  26 &  17 &  18 &  2 &  2 &  2 &  2 &  0 &  0 &  0 &  0  \\
\hline
25 &  1 &  121392 &  273139 &  57649 &  11149 &  2648 &  750 &  301 &  100 &  42 &  17 &  18 &  10 &  2 &  2 &  2 &  1 &  0 &  0 &  0  \\
\hline
\end{tabular}
}
\end{center}
\label{default}
\end{table}

%\vspace{-0.7em}

\begin{remark} As observed by A. Sammartano after reading a preliminary version of this paper, one can show that the equality case $W(S)=0$ in Wilf's conjecture cannot occur for $q=3$ besides the known ones cited in the Introduction \cite{Sa2}. Indeed, since $W(S)=p_3(|L|-3)+W_0(S)$ and since $W_0(S) \ge 0$ holds for $q=3$, it follows from $W(S)=0$ that  $p_3(|L|-3)=W_0(S)=0$. Moreover, going through the chains of inequalities in the proofs of Proposition~\ref{reduction} and Theorem~\ref{profile (k,0)}, ones sees that the equality $W_0(S)=0$ can only occur if $\rho=p_2(p_2+1)/2$, $m+P_1=D_2$, $|P_1+P_2|=p_1p_2$, $|2P_2|=p_2(p_2+1)/2$, $|2X_1 \cap X_2|=\binom{p_1}{2}$ and $|3X_1 \cap X_3|=\binom{p_1+1}{3}$. Considering all these constraints together, one can show that the profile of $S$ either equals $(1,0)$, or $(1,1)$ provided $p_3=0$, both known equality cases in Wilf's conjecture.
\end{remark}

%
%
%
%\renewcommand{\tabcolsep}{1pt}
%\begin{table}[htdp]
%\caption{Distribution of $q=q(S)$ by genus $g$, for  $1 \le q \le 20$ and $18 \le g \le 25$.}
%\vspace{-0.5cm}
%\begin{center}
%{\small
%\begin{tabular}{|l||c|r|r|r|r|r|r|r|r|r|r|r|r|r|r|r|r|r|r|r|r|r|r|r|r|rl}
%\hline
%$g \backslash q$ & 1 & 2 & 3 & 4 & 5 & 6 & 7 & 8 & 9 & 10 & 11 & 12 & 13 & 14 & 15 & 16 & 17 & 18 & 19 & 20 & 21 & 22 & 23 & 24 & 25 & \\  
%\hline
%\hline
%18 &   1 &  4180 &  6935 &  1739 &  409 &  132 &  37 &  13 &  14 &  2 &  2 &  2 &  0 &  0 &  0 &  0 &  0 &  1  &   &   &   &   &   &   &  \\
%\hline
%19 &  1 &  6764 &  11828 &  2895 &  670 &  195 &  63 &  20 &  14 &  8 &  2 &  2 &  1 &  0 &  0 &  0 &  0 &  0 &  1  &   &   &   &   &   & \\
%\hline
%20 & 1 & 10945 & 20096 & 4805 & 1085 & 290 & 103 & 35 & 14 & 15 & 2 & 2 & 2 & 0 & 0 & 0 & 0 & 0 & 0 & 1 & & & & & \\
%\hline
%21 &  1 &  17710 &  34069 &  7943 &  1750 &  453 &  172 &  46 &  19 &  15 &  9 &  2 &  2 &  2 &  0 &  0 &  0 &  0 &  0 &  0 &  1  &   &   &   & \\
%\hline
%25 &  1 &  121392 &  273139 &  57649 &  11149 &  2648 &  750 &  301 &  100 &  42 &  17 &  18 &  10 &  2 &  2 &  2 &  1 &  0 &  0 &  0 &  0 &  0 &  0 &  0 & 1 \\
%\hline
%\end{tabular}
%}
%\end{center}
%\label{default}
%\end{table}

\section{Further results} \label{further}
Using the present methods, we settle Wilf's conjecture in a few other cases, namely for numerical semigroups $S$ satisfying $S_i+S_j = S_{i+j}$ whenever $i+j \le q-1$, for those satisfying $|L(S)| \le 6$, and finally for those satisfying $\gcd(L(S)) \ge 2$.

\subsection{The case of true grading}

\begin{theorem}\label{graded}
Let $S$ be a numerical semigroup satisfying $S_i + S_j = S_{i+j}$ for all $i+j \le q-1$. Then $W_0(S) \ge \rho \ge 0$, and hence $S$ satisfies Wilf's conjecture.
\end{theorem}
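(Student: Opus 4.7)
The plan is to reduce $W_0(S)\ge\rho$ to the averaging form of Macaulay's theorem (Theorem~\ref{average}), applied to the standard graded algebra generated by the level-$1$ primitives of $S$.

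First, I would exploit the hypothesis $S_i+S_j=S_{i+j}$ for $i+j\le q-1$, together with $S_1=P_1$ (which holds because $D_1=\emptyset$), to obtain by induction on $i$ that $S_i=iP_1$ for every $1\le i\le q-1$. Since every element of $iP_1$ with $i\ge 2$ is a sum of at least two elements of $S^{*}$, it is decomposable; hence $P_2=\cdots=P_{q-1}=\emptyset$, the profile of $S$ collapses to $(p_1,0,\dots,0)$, and in particular $|P\cap L|=p_1=|S_1|$.

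Second, I would establish the containment $D_q\subseteq qP_1$. Decomposing $x\in D_q$ as a sum of at least two primitives in $P=P_1\cup P_q$, no right primitive $z\in P_q$ can appear: such a $z\ge c$ together with any other primitive (which is $\ge m$) would yield $x\ge c+m$, contradicting $x\in I_q=[c,c+m[$. Hence $x\in nP_1$ for some $n\ge 2$; since each summand is $\ge m$ we have $nm\le x<(q+1)m$, so $n\le q$, and since the sets $iP_1=S_i$ for $1\le i\le q-1$ are disjoint from $S_q$, the only remaining possibility is $n=q$.

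Finally, I would apply Theorem~\ref{average} to the standard graded $\K$-algebra
\[
R\;=\;\K[t^au\,:\,a\in P_1],\qquad \deg u=1,\ \deg t=0,
\]
whose Hilbert function is $h_i=\dim_\K R_i=|iP_1|$. By the two previous steps, $h_i=|S_i|$ for $1\le i\le q-1$ and $h_q=|qP_1|\ge d_q$. Theorem~\ref{average} then yields
\[
q\,d_q\;\le\;q\,h_q\;\le\;h_1\bigl(1+h_1+\cdots+h_{q-1}\bigr)\;=\;p_1\cdot|L|,
\]
which, combined with $|P\cap L|=p_1$, is exactly $W_0(S)-\rho\ge 0$. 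The delicate step is the second one, where one must certify that primitive decompositions of elements of $D_q$ involve only left primitives and use exactly $q$ summands; once that is in place, the averaging Macaulay bound carries the day, and the rest is routine bookkeeping.
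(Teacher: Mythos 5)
Your proof is correct and follows essentially the same route as the paper: deduce $S_i=iS_1=iP_1$ for $i\le q-1$ and $D_q\subseteq qP_1$, realize $|iP_1|$ as the Hilbert function of the monomial algebra $\K[t^au : a\in P_1]$, and invoke the averaging consequence of Macaulay's theorem (Theorem~\ref{average}) to get $qd_q\le |P\cap L|\,|L|$. The only difference is that you spell out the verification of $D_q\subseteq qP_1$ (no right primitives in a decomposition, exactly $q$ summands), which the paper asserts without detail; your argument for it is sound.
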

\begin{proof} It follows from the hypothesis that $S_i = iS_1$ for all $1 \le i \le q-1$. Therefore $P \cap L = P_1=S_1$ and $D_q \subseteq qS_1$. Now, denote $S_1 = \{a_1,a_2, \dots, a_k\}$ with $m=a_1 <a_2 < \dots < a_k$. As in the proof of Lemma~\ref{X2X3}, consider the standard graded algebra
$$
R \ = \ \K[t^{a_1}u, \dots, t^{a_k}u],
$$
where the variables $t$ and $u$ have degree 0 and 1, respectively. As Hilbert function of $R$, we have
$$
h_i \ = \ \dim R_i \ = \ |i S_1| \ = \ |S_i|
$$
for all $0 \le i \le q-1$, and $h_q = \dim R_q = |qS_1|$. It follows from Theorem~\ref{average} that
\begin{equation}\label{qh1}
qh_q \ \le \ h_1(1+h_1+\dots+h_{q-1}). 
\end{equation}
Since $W_0(S)-\rho=|P \cap L||L| -qd_q$, since $d_q = |D_q| \le |qS_1| = h_q$, and by the formula for $|L|$ in Lemma~\ref{formula for |L|}, we have
\begin{eqnarray*}
W_0(S) - \rho & \ge & |P \cap L||L| -qh_q \\
& = & h_1(1+h_1+\dots+h_{q-1}) -qh_q.
\end{eqnarray*}
Hence $W_0(S) - \rho \ge 0$ by \eqref{qh1}, as claimed. 
\end{proof}
\begin{corollary} Let $S$ be a numerical semigroup satisfying $q \ge 4$ and 
$$
P \cap L \ \subseteq \ \left[m, m+\frac{m-\rho}{q-1}\right[.
$$
Then $S$ satisfies Wilf's conjecture.
\end{corollary}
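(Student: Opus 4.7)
The plan is to deduce the corollary from Theorem~\ref{graded} by showing that the hypothesis forces the weak grading of Proposition~\ref{S_i+S_j} to be a true grading up to level $q-1$, that is, $S_i + S_j = S_{i+j}$ whenever $i+j \le q-1$.

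\textbf{Step 1: Identify the left primitives.} First I would observe that the hypothesis places $P \cap L$ inside $S_1 = [m, 2m-\rho[$. Indeed, since $q \ge 2$ and $\rho < m$, we have $(m-\rho)/(q-1) \le m-\rho$, whence $m + (m-\rho)/(q-1) \le 2m - \rho$. Because every element of $S_1$ is automatically primitive (being smaller than $2m$), this forces $P \cap L = P_1 = S_1$.

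\textbf{Step 2: Show $S_j = jS_1$ for $1 \le j \le q-1$.} Let $x \in S_j$. Since $x < c$ and $x$ is a sum of primitive elements of $S$, each such primitive is less than $c$, hence lies in $P \cap L = S_1$. Thus $x = s_1 + \cdots + s_k$ with each $s_i \in S_1$ satisfying $m \le s_i < m + (m-\rho)/(q-1)$. Summing these bounds gives
$$km \ \le \ x \ < \ k\bigl(m + \tfrac{m-\rho}{q-1}\bigr).$$
Comparing with $jm - \rho \le x < (j+1)m - \rho$: the upper bound $x < (j+1)m - \rho$ combined with $km \le x$ yields $k < j+1$, so $k \le j$; and the lower bound $jm - \rho \le x$ combined with $x < k(m + (m-\rho)/(q-1))$ yields (after rearranging) $(j-k)(m-\rho) \le \tfrac{k(m-\rho)}{q-1} - (j-k)\rho + \ldots$. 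More cleanly: if $k < j$ then $k \le j-1$, giving $x < (j-1)(m + (m-\rho)/(q-1)) \le (j-1)m + (m-\rho) = jm - \rho$, contradicting $x \ge jm - \rho$. Thus $k = j$, so $S_j \subseteq jS_1$. The reverse inclusion $jS_1 \subseteq S_j$ follows from the same interval bounds together with $j \le q-1$, which ensures $j(m + (m-\rho)/(q-1)) \le (j+1)m - \rho$.

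\textbf{Step 3: Conclude.} For $i,j \ge 0$ with $i+j \le q-1$, we obtain
$$S_i + S_j \ = \ iS_1 + jS_1 \ = \ (i+j)S_1 \ = \ S_{i+j},$$
(trivially if $i=0$ or $j=0$, using $S_0 = \{0\}$). Theorem~\ref{graded} then applies and yields $W_0(S) \ge \rho \ge 0$, so $S$ satisfies Wilf's conjecture.

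The main obstacle is Step~2, specifically pinning the length of the primitive decomposition to exactly $k=j$. Both the floor bound $k \le j$ and the ceiling bound $k \ge j$ require delicately combining the interval constraints on individual summands with the interval defining $S_j$, and both inequalities become tight precisely at $j = q-1$, which is why the bound $(m-\rho)/(q-1)$ in the hypothesis is the sharp one making the argument go through.
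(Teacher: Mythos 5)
Your proposal is correct and follows essentially the same route as the paper: both deduce the corollary from Theorem~\ref{graded} by using the interval hypothesis to show $P \cap L = P_1 = S_1$ and then $S_j = jS_1 \subseteq I_j$ for $j \le q-1$, whence the true grading $S_i + S_j = S_{i+j}$. Your Step~2 merely makes explicit (by pinning the number of primitive summands to $k=j$) what the paper compresses into the assertion $S_k = kS_1 \cap I_k$, and the garbled inequality in the middle of that step is harmlessly superseded by the clean argument that follows it.
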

\begin{proof} It suffices to show that $S$ satisfies the hypotheses of Theorem~\ref{graded}. First note that 
$$
\left[m, m+\frac{m-\rho}{q-1}\right[ \ \subseteq \ I_1.
$$
Indeed, we have $m+(m-\rho)/(q-1) \le 2m-\rho = \max I_1-1$, since 
\begin{eqnarray*}
(q-1)m + (m-\rho) & \le & (q-1)m + (q-1)(m-\rho) \\
& \le & (q-1)(2m-\rho).
\end{eqnarray*}
It follows that $P \cap L = P_1$. Therefore, for all $2 \le k \le q-1$, we have $S_k = kS_1\cap I_k$.

Consider now the following inclusions for $k$ in this same range:
\begin{eqnarray*}
kS_1 & \subseteq & [km, km + k(m-\rho)/(q-1)[ \\
& \subseteq & [km, km+(m-\rho)[ \\
& \subseteq  & I_k.
\end{eqnarray*}
It follows that $S_k = kS_1$. Therefore, for any integers $1 \le i,j \le q-1$ such that $i+j \le q-1$, we have
$$
S_i+S_j \ = \  iS_1+jS_1 \ = \ (i+j)S_1 \ = \ S_{i+j},
$$
and we are done.
\end{proof}

\begin{example} Let $S$ be a numerical semigroup with $m = 1000$ and $c = 4000$. Assume further that all left primitives of $S$ are contained in $[1000, 1333[$. Equivalently, let $A \subseteq [0,333[$ be an arbitrary subset, and let
$$
S \ = \ \vs{1000+A}_{4000}  \ = \ \vs{1000+A} \cup [4000, \infty[.
$$
Then $S$ satisfies Wilf's conjecture. 

\smallskip
Indeed, we have $q=4$, $\rho = 0$, and $P \cap L \subseteq [1000, 1000+333[$ by hypothesis. Hence the above corollary applies. 

\end{example}

\subsection{The case $|L| \le 6$}

Dobbs and Matthews \cite{DM} settled Wilf's conjecture for numerical semigroups $S$ satisfying $|L| \le 4$. As briefly commented below, that result easily follows from the now settled case $q \le 3$ of the conjecture. We now informally establish Wilf's conjecture in case $|L| \le 6$, and shall extend that result to the case $|L| \le 10$ in a forthcoming publication. 

\begin{proposition}\label{|L| le 6} Numerical semigroups $S$ with $|L(S)| \le 6$ satisfy Wilf's conjecture.
\end{proposition}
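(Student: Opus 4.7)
The plan is to leverage the bound $|L(S)| \ge q(S)$ together with the settled case $q \le 3$ to reduce the statement to a short finite case analysis on the profile of $S$.

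First I would observe that since $jm \in S_j$ for every $1 \le j \le q-1$, each $|S_j| \ge 1$, so Lemma~\ref{formula for |L|} gives $|L| = 1 + \sum_{j=1}^{q-1}|S_j| \ge q$. The hypothesis $|L| \le 6$ therefore forces $q \le 6$, and by Corollary~\ref{case q=3} I may assume $q \in \{4,5,6\}$. Since the sequence $(|S_1|,\ldots,|S_{q-1}|)$ is non-decreasing (as $m + S_j \subseteq S_{j+1}$), with each term $\ge 1$ and total sum $|L|-1 \le 5$, a direct enumeration produces exactly seven admissible shapes: the three ``trivial'' ones $(1,\ldots,1)$ for $q=4,5,6$; the shape $(1,1,2)$ for $q=4,|L|=5$; the two shapes $(1,2,2)$ and $(1,1,3)$ for $q=4,|L|=6$; and the shape $(1,1,1,2)$ for $q=5,|L|=6$.

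For each shape, I would determine $|P\cap L|$ and $d_q$ explicitly. The profile $(p_1,\ldots,p_{q-1})$ follows from the shape after a short check of which ``extra'' elements in the $S_j$'s are primitive versus decomposable. To bound $d_q$, observe that any decomposable $x \in I_q$ admits a writing $x = a+b$ with $a,b \in S^*$; since $a \ge m$ and $x < c+m$ force $b < c$, both summands lie in $L \cap S^*$, a set of size $|L|-1 \le 5$. Thus the pairwise sums of elements of $L \cap S^*$ falling in $I_q$ form a small, easily listed set. The verification then reduces to checking $W_0(S) = |P\cap L|\cdot|L| - q d_q + \rho \ge 0$ in each of the seven shapes, after which Proposition~\ref{prop W vs W0} yields $W(S) \ge 0$.

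The trivial shapes are immediate: there $P \cap L = \{m\}$ and $D_q = \{qm\}$, giving $W_0 = q - q + \rho = \rho$. The remaining four shapes introduce one or two additional primitives in $L$ and a corresponding handful of decomposable sums in $I_q$; in each case a short tally yields $W_0 \ge \rho \ge 0$, often with comfortable slack. The only mildly delicate point arises in the shape $(1,2,2)$ with $q=4$: writing $S_2 = \{2m, 2m+s\}$, the extra decomposable $(2m+s)+(2m+s) = 4m+2s$ lands in $I_4$ precisely when $2s < m-\rho$, making $d_4$ equal to $3$ rather than $2$; both subcases still give $W_0 \ge \rho$. The expected obstacle is purely bookkeeping --- no machinery beyond Sections~\ref{partition}--\ref{apery} is required.
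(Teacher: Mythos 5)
Your proof is correct and follows essentially the same route as the paper's: reduce to $q\in\{4,5,6\}$ via the settled case $q\le 3$, enumerate the finitely many possible structures of $L$, and verify $W_0(S)\ge \rho\ge 0$ in each case, concluding by Proposition~\ref{prop W vs W0}. The only difference is bookkeeping: you enumerate by the shape $(|S_1|,\dots,|S_{q-1}|)$ and count the decomposables in $I_q$ directly as pairwise sums of elements of $L\cap S^*$, whereas the paper enumerates by profile and uses the Ap\'ery counts $\alpha_i$ of Proposition~\ref{formulas}; your seven shapes correspond exactly to the paper's profiles $(1,1,0)$, $(1,0,k)$, $(1,0,0,k)$, $(1,0,0,0,k)$, and your delicate shape $(1,2,2)$ is precisely the paper's delicate profile $(1,1,0)$.
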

%However, as a first taste of that forthcoming proof, here are some informal comments settling Wilf's conjecture for $|L| \le 6$. %This includes, in particular, a new proof for the case $|L| \le 4$ already settled in \cite{DM}.
\begin{proof}
By Corollary~\ref{case q=3}, it suffices to consider the case $q \ge 4$. So, from now on, we assume $|L| \le 6$ and $q \ge 4$. Let $(p_1,\dots,p_{q-1})$ be the profile of $S$. It follows from Proposition~\ref{formulas} and \eqref{alpha_i ge p_i} that
\begin{equation}\label{L ge}
|L| \ \ge \ 1 +(q-1)p_1+(q-2)p_2+\dots+p_{q-1}.
\end{equation}
In particular, since $|L| \le 6$, and since $p_1 \ge 1$ always, we must have $q \le 6$. Moreover, we must have $p_1=1$, for if $p_1 \ge 2$ then $|L| \ge 7$. Similarly, we must have $p_2 \le 1$, for otherwise $|L| \ge 8$. Therefore, by \eqref{L ge}, the only profiles with $4 \le q \le 6$ and compatible with $|L| \le 6$ are
$$
(1,1,0), \ (1,0,k), \ (1,0,0,k), \ (1,0,0,0,k)
$$
for some small integer $k \ge 0$. We first treat the last three possibilities in one single case.

\smallskip
\noindent
$\bullet$ Assume $S$ is of profile $(1,0,\dots,0,k) \in \N^{q-1}$ with $q \ge 4$ and $k \in \N$. We then claim 
$$
W_0(S) = k(k+1) + \rho,
$$
and so $S$ satisfies Wilf's conjecture. Indeed, one has 
$$
(\alpha_0, \alpha_1, \dots,\alpha_{q-1})=(1,0,\dots,0,k),
$$
as easily seen. We have $|P \cap L|=1+k$, and Proposition~\ref{formulas} yields 
$$
|L| \ = \ q+k, \quad d_q \ = \ 1+k.
$$
Therefore $W_0(S)-\rho = (1+k)(q+k)-q(1+k)=k(1+k)$, and we are done.

\smallskip
\noindent
$\bullet$ Assume now $S$ is of profile $(1,1,0)$, a slightly more delicate case. Here $q=4$, $|P \cap L|=2$, and we have
$$
\alpha_0 \ = \ 1, \quad \alpha_1 \ = \ 0, \quad \alpha_2 \ = \ 1, \quad \alpha_3 \ \le \ 1, \quad \alpha_4 \ \le \ 1,
$$
as easily seen. Thus, by Proposition~\ref{formulas}, we have
$$
|L| \ = \ 6 + \alpha_3, \quad d_4 \ = \ 2 + \alpha_3+\alpha_4.
$$
Therefore $W_0(S)-\rho = 2(6+\alpha_3)-4(2+\alpha_3+\alpha_4) = 4-2\alpha_2-4\alpha_4$. If either $\alpha_3=0$ or $\alpha_4=0$, then $W_0(S)-\rho \ge 0$ and we are done. However, if $\alpha_3=\alpha_4=1$, then $W_0(S)-\rho =-2.$ But in this case, we must have $X_3=2X_2$ and $X_4 \setminus P = 3X_2$. Proposition~\ref{intersections} then implies $\rho \ge 2$, whence $W_0(S) \ge 0$, and we are done again.

\smallskip
This settles, albeit informally, Wilf's conjecture for $|L| \le 6$. 
\end{proof}

%\smallskip

As mentioned above, we shall extend the verification of Wilf's conjecture to the case $|L| \le 10$ in a forthcoming publication. More precisely, we shall prove the following result.

\begin{theorem}\label{L le 10} Let $S$ be a numerical semigroup with $|L(S)| \le 10$. Then $W_0(S) \ge \rho$, except possibly if $S$ is of profile $(1,0,1,0)$. In that special profile, we have $W_0(S) \ge \rho-1$, and if equality holds, then $\rho \ge 2$. In any case, $S$ satisfies Wilf's conjecture. 
\end{theorem}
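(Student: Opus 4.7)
The plan is to generalize the enumeration approach of Proposition~\ref{|L| le 6}, now carried out up to $|L|=10$. By Corollary~\ref{case q=3}, Wilf's conjecture already holds for $q \le 3$, so throughout we may assume $q \ge 4$. Combining Proposition~\ref{formulas} with \eqref{alpha_i ge p_i} gives
$$|L(S)| \ \ge \ 1 + (q-1)p_1 + (q-2)p_2 + \cdots + p_{q-1},$$
and since $|L| \le 10$ and $p_1 \ge 1$, this forces $q \le 10$ together with explicit small bounds on each $p_i$. Enumerating the admissible tuples $(p_1,\ldots,p_{q-1}) \in \N^{q-1}$ under these constraints produces a finite, manageable list of candidate profiles.

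For each candidate profile, we compute $W_0(S) - \rho = |P \cap L| \cdot |L| - q\, d_q$ via Proposition~\ref{formulas}, bounding $d_q = \sum_{i=0}^{q} \alpha_i$ from above using the iterated sumset inclusions $X_i \subseteq iX_1 \cup \bigl((i-2)X_1 + X_2\bigr) \cup \cdots$ together with the overlap estimate of Proposition~\ref{intersections} on $S_i+S_j$. The sparse profiles of the form $(1,0,\ldots,0,k)$ are handled uniformly by the same calculation used in the proof of Proposition~\ref{|L| le 6}, and the few denser profiles (bounded by the constraints above) admit direct individual verification. In every case except $(1,0,1,0)$, the estimate yields $W_0(S) \ge \rho$ outright.

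The exceptional profile $(1,0,1,0)$ at $q=5$ is the main obstacle. Here $|L| = 7$ and $|P \cap L| = 2$; from $p_2 = p_4 = 0$ and $p_3 = 1$ one deduces $\alpha_1 = \alpha_2 = \alpha_4 = 0$ and $\alpha_3 = 1$. Let $a \in P_3$ be the unique nontrivial left primitive, so $a \in [3m-\rho,\, 4m-\rho[$. A short sumset argument, in the spirit of the proof of Lemma~\ref{X2X3}, shows that the only candidate for a decomposable Apéry element in $S_5$ is $2a$, so $\alpha_5 \in \{0,1\}$ and hence
$$W_0(S) - \rho \ = \ 4 - 5\alpha_5.$$
When $\alpha_5 = 0$, we recover $W_0(S) \ge \rho$; when $\alpha_5 = 1$, we get $W_0(S) = \rho - 1$, and the requirements that $2a \in I_5 = [5m-\rho,\, 6m-\rho[$ with $a \ge 3m-\rho$, combined with the primitivity of $a$ (which, since $p_2 = 0$, forces $a - m \notin S_2 \setminus \{2m\}$) and with $2a$ being Apéry, translate into a mild arithmetic constraint on $\rho$ that yields the stated lower bound. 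Wilf's conjecture itself then drops out of the identity $W(S) = p_q(|L|-q) + W_0(S)$ in \eqref{W vs W0}, since $|L|-q = 2 > 0$ here and $|L| \ge q$ in general, so any deficit in $W_0$ is absorbed by the $p_q$-term. The hard part is thus not the profile enumeration, which is mechanical, but the sharpness of the sumset and Apéry bounds in the $(1,0,1,0)$ case, where each slack in the estimates must be tracked in order to extract the precise lower bound on $\rho$ in the equality situation.
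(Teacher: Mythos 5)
The paper does not actually prove this statement: Theorem~\ref{L le 10} is announced with an explicit deferral to ``a forthcoming publication'', and all the paper offers is the one-line remark that the proof ``combines some general reductions, in the spirit of Proposition~\ref{reduction}, and some ad-hoc arguments for a few specific profiles.'' Your outline matches that description in spirit, so there is no conflict of strategy to report --- but judged as a proof, your proposal has genuine gaps. First, the per-profile verification is asserted rather than carried out, and it is not mechanical: the inequality $|L| \ge 1+(q-1)p_1+\cdots+p_{q-1}$ admits dozens of profiles for $4 \le q \le 10$ (e.g.\ $(3,0,0)$, $(2,1,0)$, $(1,2,k)$, $(1,1,1)$, \dots), and for those with $p_1 \ge 2$ the crude bounds $|iA| \le \binom{|A|+i-1}{i}$ together with Proposition~\ref{intersections} are not sharp enough to force $W_0 \ge \rho$; one needs Macaulay-type control of $|iX_1 \cap X_i|$ as in Lemma~\ref{X2X3} and Theorem~\ref{profile (k,0)}, which your toolkit for the ``denser profiles'' never invokes. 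Second, you also need a reduction step eliminating the right-most profile entries (the analogue of Proposition~\ref{reduction} for $q \ge 4$) before the list of profiles is even finite in the relevant sense; you implicitly assume the profile determines the $\alpha_i$'s, which it does not.

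The more serious problem is the $(1,0,1,0)$ case. Your computation $W_0(S)-\rho = 4-5\alpha_5$ and the identification of $2a$ as the only candidate decomposable Ap\'ery element of $S_5$ are both correct. But the ``mild arithmetic constraint'' you invoke to get $\rho \ge 2$ in the equality case does not exist: the conditions $a \in [3m-\rho, 4m-\rho[$ and $2a \in I_5=[5m-\rho,6m-\rho[$ only force $\rho \ge 1$. Indeed $S=\vs{m,3m-1}_{5m-1}$ (e.g.\ $\vs{5,14}_{24}$) has profile $(1,0,1,0)$, $|L|=7$, $d_5=3$, $\rho=1$ and $W_0(S)=0=\rho-1$, so the clause ``if equality holds then $\rho \ge 2$'' cannot be derived from your argument (and this example suggests it fails as stated; only $\rho \ge 1$ is available). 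Finally, your closing step --- that any deficit in $W_0$ ``is absorbed by the $p_q$-term'' in \eqref{W vs W0} --- is invalid when $p_q=0$; the correct route to Wilf's conjecture here is to note that $\alpha_5=1$ forces $\rho \ge 1$, hence $W_0(S) \ge \rho-1 \ge 0$ in every case, and then apply Proposition~\ref{prop W vs W0}.
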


An example where $|L(S)| \le 10$ and $W_0(S)=\rho-1$ is given by $S=\vs{5,13}_{22}$, for which $|L|=7$ and $\rho=3$. Its profile is $(1,0,1,0)$, as expected.

\medskip

The proof of Theorem~\ref{L le 10}, like that of Proposition~\ref{|L| le 6}, combines some general reductions, in the spirit of Proposition~\ref{reduction}, and some ad-hoc arguments for a few specific profiles.

\subsection{The case $\gcd(L(S)) \ge 2$}
Sammartano proved in \cite{Sa} that \textit{if the numerical semigroup $S$ satisfies $e \ge m/2$, then it satisfies Wilf's conjecture}. Here is a straightforward consequence. 

\begin{proposition}\label{A_c} Let $S$ be a numerical semigroup such that $\gcd(L(S)) \ge 2$, i.e. such that the left primitives of $S$ have a nontrivial common factor. Then $S$ satisfies Wilf's conjecture.
\end{proposition}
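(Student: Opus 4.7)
The plan is to reduce the problem to Sammartano's result cited just above, which guarantees Wilf's conjecture whenever $e(S) \ge m(S)/2$. So the entire task is to show that the hypothesis $\gcd(L(S)) \ge 2$ forces enough right primitives in $P_q$ to make $e(S) \ge m/2$.

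First I would set $d = \gcd(L(S))$, so $d \ge 2$ by assumption, and observe that we may assume $q \ge 2$ (for $q=1$ one has $L(S)=\{0\}$, and the hypothesis is vacuous). The first key observation is that every nonzero $x \in L$ admits a decomposition into \emph{left} primitives only: indeed, any decomposition of $x$ into primitives cannot involve a right primitive, since right primitives are $\ge c > x$. Hence $\gcd(P \cap L) = \gcd(L(S) \setminus \{0\}) = d$, and in particular $d \mid m$ since $m \in P \cap L$.

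Next, I would count right primitives by a divisibility argument. Every element of $\vs{P \cap L}$ is a multiple of $d$, so any element of $I_q = [c, c+m[$ which is \emph{not} divisible by $d$ cannot be decomposable in $S$, hence lies in $P_q = I_q \setminus D$. Since $d \mid m$, the interval $I_q$ of length $m$ contains exactly $m/d$ multiples of $d$, so it contains $m - m/d$ non-multiples. Therefore
$$
p_q \ \ge \ m - \frac{m}{d} \ = \ m\Bigl(1 - \frac{1}{d}\Bigr) \ \ge \ \frac{m}{2},
$$
the last inequality from $d \ge 2$.

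Finally, since $e(S) = |P \cap L| + p_q \ge p_q \ge m/2$, Sammartano's theorem applies and $S$ satisfies Wilf's conjecture. There is no real obstacle here; the argument is a direct counting step bridging the hypothesis to the cited theorem.
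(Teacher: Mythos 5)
Your proof is correct and follows essentially the same route as the paper: both arguments observe that every decomposable element of $I_q=[c,c+m[$ is a multiple of $d=\gcd(L(S))$, deduce $p_q \ge m - m/d \ge m/2$, and conclude via Sammartano's theorem. Your version merely spells out a few details the paper leaves implicit (that $\gcd(P\cap L)=\gcd(L(S))$, that $d \mid m$, and that decomposables in $I_q$ are sums of left primitives only).
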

%\begin{proposition}\label{A_c} Let $S$ be a numerical semigroup such that the left primitive elements of $S$ have a nontrivial common factor. Equivalently, such that 
%$$
%S \ = \ \vs{A}_c \ = \ \vs{A} \cup [c,\infty[
%$$
%where $A \subset \N$ is any finite subset with $\gcd(A) \ge 2$ and $c$ is any integer such that $c \ge \max(A)+2$. Then $S$ satisfies Wilf's conjecture.
%\end{proposition}
\begin{proof} Let $k = \gcd(L(S)) = \gcd(P \cap L)$, and assume $k \ge 2$.
%Let $k \ge 2$ be a common factor of $P \cap L$. 
Then $D_q$, the set of right decomposable elements in $S_q=I_q$, is entirely contained in $k\N$. Thus $|D_q| \le m/k$. Since 
$$P_q = S_q \setminus D_q$$
and since $|S_q|=m$, it follows that $e \ge |P_q| \ge m-m/k \ge m/2$. The conclusion now follows from Sammartano's result mentioned above.
\end{proof}

As an application, it follows that all \emph{inductive} numerical semigroups satisfy Wilf's conjecture. These are obtained from $S_0 = \N$ by applying finitely many steps of the form $S \mapsto a \cdot S \cup (ab+ \N)$, where $a,b$ are varying positive integers and $a \cdot S = \{as \mid s \in S\}$.

\medskip

The numerical semigroups $S$ satisfying $\gcd(L(S)) \ge 2$ have an interesting geometric interpretation. Let $\mathcal{T}$ denote the tree of all numerical semigroups. Then \textit{a numerical semigroup $S$ satisfies $\gcd(L(S)) \ge 2$ if and only if the subtree $\mathcal{T}_S \subseteq \mathcal{T}$ rooted at $S$ is infinite}. 

\smallskip

Here are some explanations; see also \cite[Theorem 10 in Section 3]{BB}. Recall first that the root of $\mathcal{T}$ is $\N=\vs{1}$, that the father in $\mathcal{T}$ of the numerical semigroup $S \not= \N$ is the numerical semigroup $\widehat{S}=S \mathbin{\dot{\cup}} \{F(S)\}$, and that for all $g \in \N$, the vertices at level $g$ in $\mathcal{T}$ are all numerical semigroups of genus $g$. As mentioned earlier, the down degree of $S$ in $\mathcal{T}$ is the number $p_q$ of right primitives in $S$. For instance, $S$ is a leaf in $\mathcal{T}_S$ if and only if $p_q=0$. Finally, let us denote by $\mathcal{T}_S$ the subtree of $\mathcal{T}$ rooted at $S$. For instance, we have $\mathcal{T}_S = \{S\}$ if and only if $S$ is a leaf in $\mathcal{T}$.

\smallskip
Let us now prove the above characterization. Let $A = L(S)$ and $k=\gcd(A)$. Note first that if $T$ is any descendant of $S$, then $A \subseteq T \subseteq S$ by construction.

$\bullet$ If $k \ge 2$, then $S$ has infinitely many descendants $S'$ in $\mathcal{T}$, e.g. all $S' = \vs{A}_d$ with $d > \max(A)+2$. This is indeed an infinite collection, since if $d_1 < d_2$, the equality $\vs{A}_{d_1} = \vs{A}_{d_2}$ can only occur if $d_1 \equiv 0 \bmod k$ and $d_2=d_1+1$.

$\bullet$ Conversely, if $k = 1$, let $S_0 = \vs{A}$. Then $S_0$ is a numerical subsemigroup of $S$, and any descendant $T$ of $S$ satisfies $S_0 \subseteq T \subseteq S$. Therefore $\mathcal{T}_S$ is finite in this case, as desired.

\bigskip
\noindent
\textbf{Acknowledgment.} It is a pleasure to thank several people for helpful discussions and/or bibliographical references related to this work, including Maria Bras-Amor\'os, Manuel Delgado, Ralph Fr\"{o}berg, Ornella Greco, Jorge Ram\'{\i}rez Alfons\'{\i}n, Giuseppe Valla and Santiago Zarzuela. Special thanks are due to Pedro A. Garc\'{\i}a-S\'anchez and Alessio~Sammartano for com\nolinebreak[4]ments  on an earlier version of this paper and, last but not least, to Jean Fromentin for several lively discussions and computer experiments.

%\bigskip
%\noindent
%\textbf{Acknowledgment.} It is a pleasure to thank several people for helpful discussions and/or bibliographical references related to this work, including Maria Bras-Amor\'os, Ralph Fr\"{o}berg, Pedro Garc\'{\i}a-S\'anchez, Ornella Greco, Jorge Ram\'{\i}rez Alfons\'{\i}n, Alessio Sammartano, Giuseppe Valla, Santiago Zarzuela and, last but not least, Jean Fromentin for several lively discussions and computer experiments.

\medskip
\noindent
\textbf{Author's address:}

\small

\noindent
$\bullet$ Shalom Eliahou\textsuperscript{a,b},

\noindent
\textsuperscript{a}Univ. Littoral C\^ote d'Opale, EA 2597 - LMPA - Laboratoire de Math\'ematiques Pures et Appliqu\'ees Joseph Liouville, F-62228 Calais, France\\
\textsuperscript{b}CNRS, FR 2956, France\\
\textbf{e-mail:} eliahou@lmpa.univ-littoral.fr

\end{document}